\documentclass{article}[14pt]

\textwidth=16.5cm
\oddsidemargin=0cm
\evensidemargin=0cm

\usepackage{amsfonts}
\usepackage{amsmath}
\usepackage{theorem}
\usepackage{array}
\usepackage{t1enc}
\usepackage{color}

\usepackage[T2A,T1]{fontenc} 
\usepackage[utf8]{inputenc}

\pagestyle{empty}

\newcommand{\curve}{\chi}
\newcommand{\Fq}{\mathbb{F}_q}

\renewcommand{\vec}[1]{{\bf #1}}

\newcommand{\ev}{\mathrm{ev}}
\newcommand{\im}{\mathrm{im}}
\newcommand{\Pinf}{P_{\infty}}
\newcommand{\val}{v_{\Pinf}}
\newcommand{\ffield}{F}
\newcommand{\MGS}{\mathcal M_{s,\ell}(D,G,A)}

\makeatletter
\def\easycyrsymbol#1{\mathord{\mathchoice
  {\mbox{\fontsize\tf@size\z@\usefont{T2A}{cmr}{m}{n}#1}}
  {\mbox{\fontsize\tf@size\z@\usefont{T2A}{cmr}{m}{n}#1}}
  {\mbox{\fontsize\sf@size\z@\usefont{T2A}{cmr}{m}{n}#1}}
  {\mbox{\fontsize\ssf@size\z@\usefont{T2A}{cmr}{m}{n}#1}}
}}
\makeatother
\newcommand{\Ya}{\easycyrsymbol{\CYRYA}}

\newcommand{\bigO}{\mathcal{O}}
\newcommand{\softO}{\tilde \bigO}

\newtheorem{theorem}{Theorem}

{ \theorembodyfont{\rmfamily}
\newtheorem{example}[theorem]{Example}
}

\newtheorem{lemma}[theorem]{Lemma}
\newtheorem{corollary}[theorem]{Corollary}

\newtheorem{remark}[theorem]{Remark}

\newtheorem{definition}[theorem]{Definition}

\newenvironment{proof}[1][Proof]{\textbf{#1.} }{\ \rule{0.5em}{0.5em}}

\begin{document}

\title{List-decoding of AG codes without genus penalty}
\author{Peter Beelen and Maria Montanucci \thanks{Department of Applied Mathematics and Computer Science, Technical University of Denmark, Kongens Lyngby 2800, Denmark,  {\em emails: } pabe@dtu.dk, marimo@dtu.dk}}
\date{\today}
\maketitle

\begin{abstract}
In this paper we consider algebraic geometry (AG) codes: a class of codes constructed from algebraic codes (equivalently, using function fields) by Goppa. These codes can be list-decoded using the famous Guruswami-Sudan (GS) list-decoder, but the genus $g$ of the used function field gives rise to negative term in the decoding radius, which we call the genus penalty. In this article, we present a GS-like list-decoding algorithm for arbitrary AG codes, which we call the \emph{inseparable GS list-decoder}. Apart from the multiplicity parameter $s$ and designed list size $\ell$, common for the GS list-decoder, we introduce an inseparability exponent $e$. Choosing this exponent to be positive gives rise to a list-decoder for which the genus penalty is reduced with a factor $1/p^e$ compared to the usual GS list-decoder. Here $p$ is the characteristic. Our list-decoder can be executed in $\softO(s\ell^{\omega}\mu^{\omega-1}p^e(n+g))$ field operations, where $n$ is the code length.
\end{abstract}

\begin{small}

{\bf Keywords:} algebraic geometry codes, list-decoding

{\bf 2000 MSC:}  Primary: 94B35, Secondary: 94B27

\end{small}

\section{Introduction}

Algebraic-geometry (AG) codes have received a lot of attention since their introduction by Goppa in \cite{Goppa}, mainly for having provided some of the best error-correcting codes currently known. The fact that AG codes have a good minimum distance is expressed by the Goppa bound on the minimum distance, also known as the designed minimum distance of an AG code. We will denote this designed minimum distance by $d^*$. Many algorithms for decoding AG codes have been introduced in the last three decades.

Skrobogatov and Vladut \cite{SV} first presented an effective decoding method for general AG codes, called the basic algorithm. This algorithm can correct up to $\lfloor (d^*-g-1)/2 \rfloor$ errors, where $g$ is the genus of the curve (or equivalently: function field) from which the code has been constructed. A fast implementation for a large class of AG codes of a decoding algorithm with the same decoding radius is given in \cite{sakata_2018}. The paper \cite{sakata_2018} was the culmination of several papers on decoding AG codes using the Berlekamp-Massey-Sakata algorithm.
Pellikaan \cite{P}, Feng and Rao \cite{FR}, Duursma \cite{D} and Ehrhard \cite{E} have given algorithms for decoding up to the designed minimum distance $d^*$ of the AG code, i.e. for correcting up to $\lfloor (d^*-1)/2 \rfloor$ many errors.  In fact as shown by Kirfel and Pellikaan \cite{KP}, the Feng-Rao algorithm can correct certain AG codes called one-point codes up to $\lfloor (d_{FR}-1)/2 \rfloor$, where $d_{FR} \geq d^*$ is the Feng-Rao bound. The Feng-Rao algorithm is based on Gaussian elimination for solving the system of syndrome equations using majority logic (also known as majority voting), which was generalized by Duursma in \cite{D} for a large class of AG codes. Decoding up to the designed minimum distance can also be realized using the Berlekamp-Massey-Sakata algorithm \cite{sakata_extension_1990}. An overview and many references to the early literature can be found in \cite{handbook}. All these algorithms are bounded distance decoders: the aim is to correct up to a certain number of errors, thus returning a unique codeword or a decoding failure.

Another development started with the celebrated Guruswami-Sudan (GS) list-decoder \cite{GS}. Applied to AG codes, it is capable of decoding beyond the designed minimum distance by returning a list of all codewords within a certain Hamming distance $\tau$ from the received word. Looking back at the bounded distance decoders we discussed, the decoding radius of the basic algorithm from \cite{SV} suffers from a \emph{genus penalty} $g/2$. Likewise, one can show that also the GS list-decoder suffers from a genus penalty, this time of magnitude $g/s$, or actually of magnitude $\ell g /s(\ell+1)$ as we will show. An curious property of the GS decoder when applied to AG codes, is that one can use it to correct up to the designed minimum distance, but only by choosing the multiplicity parameter $s$ and designed list size $\ell$ in the order of magnitude of the genus $g$. This is a somewhat unnatural situation, since one knows in advance that when decoding up to the designed minimum distance, the list size will be only one in case of successful decoding. The aim of this paper is to present a simple variant of the GS list-decoder where the genus penalty is removed. In order to achieve this, we present what we call the \emph{inseparable GS list-decoder with exponent $e$}. Here $e$ can be any nonnegative integer. We will see that for a given $e$, the genus penalty of the resulting list-decoder is reduced with a factor $1/p^e$. If $e=0$, one retrieves the usual GS list-decoder. Choosing $e$ such that $p^e \ge 1+g\ell$, we will be able to remove the genus penalty entirely. This solves the problem of removing the genus penalty in the GS list-decoder completely. Previously, this was only done in case $s=1$ in  \cite{Pan1,Pan2}. Our approach is very different from the approach taken in \cite{Pan1,Pan2}.

To denote algorithmic complexity, we will use the common big-O notation $\bigO(\cdot)$ and its variation, the soft-O notation $\softO(\cdot)$, in which logarithmic factors are omitted. Our complexity estimates will involve the quantity $\omega$, which denotes some real number such that the product of any two matrices in $\mathbb{F}_q^{m \times m}$ can be computed using $\mathcal{O}(m^\omega)$ operations in $\mathbb{F}_q$. The current record with $\omega< 2.37286$ is due to \cite{AW}. An efficient realization of the GS list-decoder, achieving the complexity $\softO(s\ell^{\omega}\mu^{\omega-1}(n+g))$ in the fully general setting of arbitrary AG codes, has been proposed recently in \cite{BRS}. Here $\mu$ denotes the smallest positive element in the Weierstrass semigroup of a chosen rational place, which we will denote by $P_\infty$, of the function field used to construct the AG code. 
Using the algorithms developed in \cite{BRS}, we will show that the inseparable GS list-decoder with exponent $e$ can be executed in complexity $\softO(s\ell^{\omega}\mu^{\omega-1}p^e(n+g))$. List-decoding with the genus penalty removed completely from the list-decoding radius can be done in complexity $\softO(s\ell^{\omega}\mu^{\omega-1}(1+g\ell)(n+g))$ if the characteristic is viewed as a constant. No direct comparison with the results for $s=1$ from \cite{Pan1,Pan2} can be made, since there no complexity considerations were specified. However, specializing back to the case of bounded distance decoding ($\ell=s=1$), we will obtain that our algorithm can decode any AG code up to its designed minimum distance in complexity $\softO(\mu^{\omega-1}(1+g)(n+g))$. We will therefore be able to compare this specialization of our algorithm with existing decoders.

We will see that for various good AG codes, such as the class of one-point Hermitian codes, this is faster than the currently fastest known decoder from \cite{sakata_2018}. We will also give an example where the decoder from \cite{sakata_2018} is faster. However, the algorithm from \cite{sakata_2018} does suffer under the genus penalty $g/2$. However, as reported in \cite{sakata_2018}, their decoder does often correct up to $(d^*-1)/2$ many errors, which is why we have chosen to compare our algorithm with theirs anyway. Nonetheless, a comparison with of our decoder with decoding algorithms not suffering a genus penalty is the most fair comparison.  Therefore, we will compare our results with the decoder from \cite{handbook}, which can decode the class of so-called one point codes up to the designed minimum distance $d^*$ (in fact even up to the Feng-Rao bound on the minimum distance). In the best case, the complexity of the decoder presented in \cite{handbook} has the same complexity as the algorithm from \cite{sakata_2018}, but in general the given complexity bound in \cite{handbook} worse. More precisely, the complexity reported in \cite{handbook} is $\bigO(\mu n^2+q^{t+1}(a_1+\cdots+a_t)+tnq^t)$, where $a_1,\dots,a_t$ form a minimal set of generators of the Weierstrass semigroup at $P_\infty$. Another decoder that we will compare our results with is the decoder in \cite{LBO}, which also can decode up to the designed minimum distance and has complexity $\bigO((1+g)(n+g)^2)$. As we will see, our decoder often outperforms the existing decoders. In the case of the well-known one-point Hermitian codes over $\mathbb{F}_{q^2}$ our designed minimum distance decoder has complexity $\softO(q^{\omega+4})$, which outperforms all other currently known algorithms including the one from \cite{sakata_2018}. Finally, note that a decoder which can correct up to the designed minimum distance can be obtained from the usual GS list-decoder by choosing $s,\ell \in \bigO(g)$. However, even using the fast algorithms from \cite{BRS}, this results in a decoder with complexity $\softO(\mu^{\omega-1}(1+g)^{\omega+1}(n+g))$, which is always larger than the complexity of our decoder.

The paper is organized as follows. In Section 2 the necessary setup and background on AG codes and the GS list-decoder are presented. In Section 3 we formulate a variation of the GS list-decoder and analyze its list-decoding radius. In Section 4 we delve into algorithm aspects and derive the complexity of our list-decoder. We finish by studying various examples, comparing the complexity of our decoder with existing ones.

\section{Preliminaries}

\subsection{AG codes}

Let $\mathcal{X}$ be an irreducible, nonsingular, projective algebraic curve $\curve$ of genus $g$ defined over a finite field $\Fq$ of characteristic $p$. We denote the function field of $\mathcal{X}$ by $\ffield$, and by $\mathbb{P}_\ffield$ the set of places of $\ffield$. Any place $P$ of $F$ has a unique discrete valuation ring $\mathcal{O}_P$ associated it in which $P$ is a maximal ideal. The quotient ring $F_P:=\mathcal{O}_P/P$ is a field, called the residue field of $P$. One can show that $F_P$ is a finite extension of $\Fq$. The extension degree $[F_P:\Fq]$ is called the degree of the place $P$. In case $\deg P=1$, the place $P$ is called $\Fq$-rational or simply rational.

A divisor $A$ on $\ffield$ is a formal sum $A=\sum_{P \in \mathbb{P}_\ffield} n_P P$ where $n_P \in \mathbb{Z}$ and $n_P=0$ for all but a finite number of places $P \in \mathbb{P}_\ffield$. The integer $n_P$ is also denoted by $v_P(A)$ and called the valuation of $A$ at the place $P$. The divisor $A$ is called effective, in formulas $A \geq 0$, if $v_P(A) \geq 0$ for all $P \in \mathbb{P}_\ffield$. The degree of $A$ is the sum $\deg(A):=\sum_{P \in \mathbb{P}_\ffield}v_P(A) \deg P$, while the support of $A$ is the finite set $\mathrm{supp}(A):=\{P \in \mathbb{P}_\ffield \mid v_P(A) \ne 0\}$. The Riemann-Roch space $L(A)$ of $A$ is the $\overline{\mathbb{F}}_q$-vector space given by

$$L(A):=\{f \in \ffield \setminus \{0\} \mid (f) + A \geq 0\} \cup \{0\}.$$

We denote by $\ell(A):=\dim L(A)$ its dimension over the constant field $\Fq$. The Riemann-Roch theorem \cite[Theorem 1.5.15]{S} implies that $\ell(A) \geq \deg(A)+1-g$ and that equality holds if $\deg(A) \geq 2g-1$. Moreover $\ell(A)=0$ if $\deg(A)<0$ as the degree of a principal divisor is zero. For more information on function fields, see \cite{S}.

\begin{definition} Assume that $\ffield$ has at least $n$ rational places. For $P_1,\dots,P_n$ pairwise distinct rational places of $\ffield$ define $D$ to be the divisor $D=P_1+\cdots+P_n$. Let $G$ be a divisor whose support is disjoint from that of $D$.
Then we define the algebraic geometry (AG) code: \[C_L(D,G)=\{ev_D(f)=(f(P_1),\dots,f(P_n)) \mid f \in L(G)\} \subset \mathbb{F}_q^n.\]
\end{definition}

For further details on this code, see \cite[Chapter 2]{S}. Here we state some properties of $C_L(D,G)$ that we will use in the following.
The code $C_L(D,G)$ has designed minimum distance $d^*=n-\deg(G)$. We assume that $\deg G \le n+2g-1$, since for $\deg G \ge n+2g-1$, we have $C_L(D,G)=\Fq^n.$ In fact, we will later see that we can even make a stronger assumption on the degree of $G$, see the discussion after Theorem \ref{thm:GS_new}.

\begin{remark}
In his original construction Goppa considered AG codes $C_\Omega(D,G)$ defined by using residues of certain differentials. These codes can also be obtained as evaluation codes \cite[Proposition 2.2.10]{S}. Hence a decoder that can handle the AG codes defined above, can also handle codes of the form $C_\Omega(D,G)$.
\end{remark}

\subsection{Guruswami-Sudan Decoding}\label{subsec:GS}

The aim of this section is to describe the Guruswami-Sudan list-decoding \cite{GS} for $C_L(D, G)$.


To do so, we fix $s, \ell \in \mathbb{N}$, with $s \leq \ell$ (called multiplicity parameter and designed list size, respectively). The corresponding list-decoding radius will be denotes by $\tau$.

\begin{definition}
Let $P$ be a rational place of $\ffield$, $r \in \Fq$ and $Q(z) \in \ffield[z]$. One says that $Q$ has a root of multiplicity $s$ at $(P,r)$ if for a local parameter $\phi$ of $P$, there exist $c_ {a,b} \in \Fq$ such that
$$Q(z)=\sum_{a,b \geq 0, \ a+b \geq s} c_{a,b}\phi^a(z-r)^b, $$
with $c_{a,s-a} \ne 0$ for at least one value $0 \leq a \leq s$.
\end{definition}

An immediate consequence of the triangle inequality is that if  $Q \in \ffield[z]$ has a root of multiplicity $s$ at $(P,r)$, and $f \in \ffield$ is
such that $f(P)=r$, then $v_P(Q(f)) \geq s$, see \cite[Lemma 4.2]{BRS}.

The list-decoding using the Guruswami-Sudan algorithm is based on the following well-known result:

\begin{theorem}\label{thm:GS}
Let $f \in L(G)$ and write $\vec c=(f(P_1),\dots,f(P_n))$ for the corresponding codeword in $C_L(D,G)$. Further, let $\vec r \in \Fq^n$, $\vec e=\vec r-\vec c$ the error vector and write $\tau=w_H(\vec e).$  Further, let $s \le \ell$ be positive integers. Now let $A$ be any divisor whose support is disjoint from $D$ such that $\deg A < s(n-\tau)$. Then, any nonzero $Q(z) \in \ffield[z]$ of the form $Q(z)=Q_\ell z^{\ell}+\cdots + Q_0$, such that
\begin{enumerate}
\item $Q_{j} \in L(A-jG)$ for $j=1,\dots,\ell$ and
\item the points $(P_i,r_i)$ are zeroes of $Q(z)$ with multiplicity at least $s$ for $i=1,\dots,n$,
\end{enumerate}
satisfies $Q(f)=0$.
\end{theorem}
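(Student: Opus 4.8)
The plan is to show that $Q(f)$, viewed as an element of $\ffield$, is forced to be zero by a degree count: it lies in $L(A)$, it vanishes to order at least $s$ at every evaluation place $P_i$ at which no error occurred, and there are so many such places that the associated divisor has negative degree, leaving no room for a nonzero function.

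The first step is the containment $Q(f)=\sum_{j=0}^{\ell}Q_j f^j\in L(A)$. For each $j$ the hypothesis on the coefficients gives $Q_j\in L(A-jG)$ (for $j=0$ this reads $Q_0\in L(A)$), so $(Q_j)+A-jG\ge 0$; on the other hand $f\in L(G)$ yields $(f)+G\ge 0$ and hence $(f^j)+jG=j\big((f)+G\big)\ge 0$. Adding these two effective divisors gives $(Q_j f^j)+A\ge 0$, i.e. $Q_j f^j\in L(A)$, and since $L(A)$ is an $\Fq$-vector space we get $Q(f)\in L(A)$. The second step localizes at the uncorrupted coordinates. Set $I=\{i : r_i=f(P_i)\}$; since $w_H(\vec e)=\tau$ we have $|I|\ge n-\tau$. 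For $i\in I$ the place $P_i$ is rational, the point $(P_i,r_i)$ is a zero of $Q(z)$ of multiplicity at least $s$, and $f(P_i)=r_i$, so the consequence of the triangle inequality recorded just before the statement (\cite[Lemma 4.2]{BRS}) gives $v_{P_i}(Q(f))\ge s$. Since $\mathrm{supp}(A)$ is disjoint from $\mathrm{supp}(D)$, none of these $P_i$ lies in $\mathrm{supp}(A)$, so combining $(Q(f))+A\ge 0$ with the local inequalities at the places $P_i$ with $i\in I$ shows $(Q(f))+A-s\sum_{i\in I}P_i\ge 0$, that is, $Q(f)\in L\big(A-s\sum_{i\in I}P_i\big)$.

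Finally, $\deg\big(A-s\sum_{i\in I}P_i\big)=\deg A-s|I|\le\deg A-s(n-\tau)<0$ by the hypothesis $\deg A<s(n-\tau)$, and a divisor of negative degree has trivial Riemann--Roch space (the degree of a principal divisor being zero). Hence $Q(f)=0$. I do not expect any genuine obstacle here; the only points needing attention are bookkeeping ones --- handling the constant coefficient via $L(A)=L(A-0\cdot G)$, verifying that the places $P_i$ with $i\in I$ really lie outside $\mathrm{supp}(A)$ so that the local vanishing can be folded into the global divisor inequality, and applying \cite[Lemma 4.2]{BRS} only at the at least $n-\tau$ coordinates where $f$ agrees with the received word.
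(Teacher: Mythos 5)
Your proof is correct and uses the standard argument for this result. The paper does not prove Theorem~\ref{thm:GS} explicitly (it is cited as well known), but the analogous claim in the proof of Theorem~\ref{thm:GS_new} is established in exactly this way: one observes $Q(f)\in L(A-\sum_{i:P_i\notin\mathcal E}p^esP_i)$ and concludes $Q(f)=0$ from the degree bound on $A$; your three-step unpacking (membership in $L(A)$, local vanishing at uncorrupted places via \cite[Lemma~4.2]{BRS}, and the negative-degree conclusion) is the same idea spelled out in full.
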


The existence of a nonzero $Q(z)$ satisfying the above conditions is usually guaranteed by requiring that the number of coefficients in $Q$, that is $\sum_{i=0}^\ell \dim L(A-iG)$, is strictly larger than $\binom{s+1}{2}n$, which is the number of linear equations these coefficients need to satisfy in order for the points $(P_i,r_i)$ to be zeroes of $Q(z)$ with multiplicity at least $s$. Using the estimate $\dim L(A-jG) \ge \deg(A-jG)-g+1$, this implies that a divisor $A$ and a nonzero $Q$ as in Theorem \ref{thm:GS} exist whenever
\begin{equation}\label{eq:GSdec_radius}
\tau \le \frac{s(2\ell-s+1)n-\ell(\ell+1)\deg G -2}{2s(\ell+1)} -\frac{g}{s}.
\end{equation}
We will call the term $g/s$ the \emph{genus penalty}. The goal is to find efficient algorithms that have a smaller or no genus penalty. Our first and main contribution in this paper is to show that a simple trick involving the Freshman's dream can be used to generalize Theorem \ref{thm:GS}, achieving a better decoding radius.

\section{Improving the list-decoding radius of Guruswami-Sudan using the Freshman's dream}\label{sec:Freshman}

As mentioned before, the aim of this section is to show how a simple trick involving the Freshman's dream can be used to generalize Theorem \ref{thm:GS}, achieving a better decoding radius. The main theorem also involves a parameter $t$, but this parameter plays no role in the actual decoding algorithm. It is solely used to analyze the decoding radius: for each choice of $t$ between $0$ and $s$, an upper bound on the decoding radius is found. Finding a general expression for $t$ that maximizes this upper bound seems a nontrivial task, but $t=1$ seems to give the main improvement. Therefore we state that case as an important corollary.

\begin{theorem}\label{thm:GS_new}
Let $f \in L(G)$ and write $\vec c=(f(P_1),\dots,f(P_n))$ for the corresponding codeword in $C_L(D,G)$. Further, let $\vec r \in \Fq^n$, $\vec e=\vec r-\vec c$ the error vector and write $\tau=w_H(\vec e).$  Further, let $0 \le t \le s \le \ell$ be integers and assume that $s \ge 1$. Let us write \[\tau_t^{(e)}:=\frac{n(s(\ell-t+1)-\binom{s-t+1}{2})-(\binom{\ell+1}{2}-\binom{t}{2})\deg G}{\binom{s+1}{2}+s(\ell-t+1)-\binom{s-t+1}{2}}-\frac{1+g(\ell-t+1)}{p^e\left(\binom{s+1}{2}+s(\ell-t+1)-\binom{s-t+1}{2}\right)},\] where $p$ is the characteristic and $e \ge 0$ an integer. Assume that $\tau \le \tau_t^{(e)}$. Then for any divisor $A$ with support disjoint from $D$ satisfying $\deg A < p^es(n-\tau)$, there exists a nonzero $Q(z) \in \ffield[z]$ of the form $Q(z)=Q_\ell z^{p^e\ell}+\cdots +Q_1 z^{p^e}+ Q_0$, such that
\begin{enumerate}
\item $Q_{j} \in L(A-p^ejG)$ for $j=1,\dots,\ell$, and
\item the points $(P_i,r_i)$ are zeroes of $Q(z)$ with multiplicity at least $p^es$ for $i=1,\dots,n$.
\end{enumerate}
Any such polynomial $Q(z)$ has at most $\ell$ roots, and $f$ is one of these.
\end{theorem}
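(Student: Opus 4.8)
The plan is to mimic the proof of Theorem~\ref{thm:GS}, but applied to the ``inseparable'' polynomial $Q(z)$ whose only monomials are $z^{p^e j}$. The key observation is the Freshman's dream: over a field of characteristic $p$, raising to the $p^e$-th power is additive, so if we set $\tilde z = z^{p^e}$ and think of $Q(z) = \sum_{j=0}^{\ell} Q_j z^{p^e j} = \sum_{j=0}^\ell Q_j \tilde z^j$, then $Q$ behaves like an ordinary polynomial of degree $\ell$ in the variable $\tilde z$, while each coefficient condition and each multiplicity condition has been ``inflated'' by a factor $p^e$. I would first establish existence of a nonzero $Q$ satisfying conditions (1) and (2) by the usual dimension count: the number of free coefficients is $\sum_{j=0}^{\ell} \ell(A - p^e j G)$, and the number of linear constraints imposed by requiring multiplicity $p^e s$ at the $n$ points $(P_i, r_i)$ is $\binom{p^e s + 1}{2} n$. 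Here one has to be slightly careful: because only the monomials $z^{p^e j}$ appear, the local expansion of $Q$ at $(P_i, r_i)$ in a local parameter $\phi$ and in $(z - r_i)$ does not produce all $\binom{p^e s+1}{2}$ independent conditions — the exponents of $(z-r_i)$ that actually occur are constrained, and this is exactly where the sharper combinatorial coefficients $\binom{s-t+1}{2}$ and the parameter $t$ enter. I expect the main work (and the main obstacle) to be this refined count of how many genuinely independent linear conditions the multiplicity requirement imposes on an inseparable polynomial; once that count is pinned down as $\bigl(\binom{s+1}{2} + s(\ell - t +1) - \binom{s-t+1}{2}\bigr)$-type expression divided appropriately, the inequality $\tau \le \tau_t^{(e)}$ is precisely what guarantees free-coefficients $>$ constraints, via $\ell(A - p^e jG) \ge \deg(A - p^e jG) + 1 - g$ for the relevant range of $j$ and $\ell(A-p^ejG) = 0$ once the degree drops below $0$.

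Next I would prove that any such $Q$ satisfies $Q(f) = 0$. This follows the argument of Theorem~\ref{thm:GS} almost verbatim. Writing $Q(f) = \sum_{j=0}^\ell Q_j f^{p^e j}$, condition (1) gives $Q_j f^{p^e j} \in L(A)$ (since $f \in L(G)$ implies $f^{p^e j} \in L(p^e j G)$), so $Q(f) \in L(A)$. On the other hand, for each $i$ with $r_i = f(P_i)$ we have $v_{P_i}(Q(f)) \ge p^e s$ by the multiplicity-at-$(P_i,r_i)$ property together with the remark after the multiplicity definition (applied with multiplicity $p^e s$); and at the $\tau$ error positions we lose nothing. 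Hence $Q(f) \in L\bigl(A - p^e s(D - \operatorname{supp} \text{of errors})\bigr)$, a divisor of degree at most $\deg A - p^e s(n - \tau) < 0$ by the hypothesis $\deg A < p^e s(n-\tau)$. Therefore $Q(f) = 0$.

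Finally, $Q(f) = 0$ means $f$ is a root of $Q(z) \in \ffield[z]$. To see that $Q(z)$ has at most $\ell$ roots in $\ffield$ (rather than the naive $p^e \ell$), I would use the inseparable structure once more: $Q(z) = R(z^{p^e})$ where $R(w) = \sum_{j=0}^\ell Q_j w^j \in \ffield[w]$ is nonzero of degree at most $\ell$, so $R$ has at most $\ell$ roots in $\ffield$; and the Frobenius-type map $z \mapsto z^{p^e}$ is injective on $\ffield$ (a field of characteristic $p$ has no nonzero nilpotents and $x^{p^e} = y^{p^e} \Rightarrow (x-y)^{p^e} = 0 \Rightarrow x = y$), so distinct roots $z$ of $Q$ give distinct roots $z^{p^e}$ of $R$. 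Hence $Q$ has at most $\ell$ roots in $\ffield$, and $f$ is among them. I would close by noting that these at most $\ell$ roots can be found by factoring $R$, which is the computational content exploited later; but for the statement itself the counting argument above suffices.
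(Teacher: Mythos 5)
Your argument for $Q(f) = 0$ and for the bound of $\ell$ on the number of roots are both correct; the latter even takes a slightly different and equally valid route from the paper, using injectivity of $z \mapsto z^{p^e}$ to pass to the separable polynomial $R(w)=\sum_j Q_j w^j$, whereas the paper observes directly that every root of $Q(z)$ has multiplicity at least $p^e$. However, there is a genuine gap in the existence argument, precisely where you flag the "main obstacle". You conjecture that the parameter $t$ and the coefficient $\binom{s-t+1}{2}$ arise from a refined count of how many independent conditions the multiplicity requirement imposes on an inseparable polynomial, because "the exponents of $(z-r_i)$ that actually occur are constrained". This is not where $t$ comes from. The inseparable structure does reduce the condition count: writing $Q$ locally as $\sum_{a,b} c_{a,b}\,\phi^a(z-r_i)^{p^eb}$, requiring multiplicity $p^es$ is equivalent to $c_{a,b}=0$ for $b = 0,\dots,s-1$ and $a = 0,\dots,p^e(s-b)-1$, which reduces $\binom{p^es+1}{2}$ to $p^e\binom{s+1}{2}$ conditions per point. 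But there is no $t$ anywhere in that count, and following this route you can only recover the $t=0$ case of $\tau_t^{(e)}$.

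The actual source of $t$ is a different device: the paper constructs $Q$ of the special form $Q(z) = (z-f)^{p^et}\tilde{Q}(z)$ with $\tilde{Q}(z) = \sum_{j=0}^{\ell-t}\tilde{Q}_j z^{p^ej}$. This is a pure proof trick (one cannot form $(z-f)^{p^et}$ inside the decoder since $f$ is unknown), but it sharpens the existence count. The factor $(z-f)^{p^et}$ already supplies multiplicity $p^et$ at the $n - \tau$ error-free positions, so $\tilde Q$ only needs to achieve multiplicity $p^e(s-t)$ there, reducing the constraint count at those positions from $p^e\binom{s+1}{2}$ to $p^e\binom{s-t+1}{2}$. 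On the coefficient side, one requires $\tilde Q_j \in L(A - p^e(j+t)G)$ so that $Q_j \in L(A - p^ejG)$, giving $\sum_{j=0}^{\ell-t}\dim L(A - p^e(j+t)G)$ free coefficients; the Riemann--Roch loss of $g$ is incurred only $\ell - t + 1$ times, which is the source of the $g(\ell - t + 1)$ term in $\tau_t^{(e)}$. Without this factoring idea, the dimension count you set up cannot produce the stated decoding radius for $t>0$, so the existence part of your proposal does not go through as written.
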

\begin{proof}
Since divisors with support disjoint to $D$ of any degree exist, the existence of the divisor $A$ is trivial. Now let $A$ be chosen arbitrarily. We will show that there exists a nonzero polynomial of the form $Q(z)=(z-f)^{p^et}\tilde{Q}(z)$, with $\tilde{Q}(z)=\tilde{Q}_{p^e(\ell-t)}z^{\ell-t}+\cdots+\tilde{Q}_1z^{p^e}+\tilde{Q}_0$, satisfying all requirements. First of all, to make sure that the coefficient $Q_j$ of $z^{p^ej}$ in $Q(z)$ is in $L(A-p^ejG)$ for all $j$, we simply require that $\tilde{Q}_j \in L(A-p^e(j+t)G)$. Choosing bases of the spaces $L(A-p^e(j+t)G)$, we can express $\tilde{Q}(z)$ as an element of an $\mathbb{F}_q$-vector space of dimension $\sum_{j=0}^{\ell-t}\dim L(A-p^e(j+t)G).$

Now denote by $\mathcal E=\{P_{i_1},\dots,P_{i_\tau}\}$ the set of error positions. Since for $P_i \not\in \mathcal E$, the point $(P,r_i)$ equals $(P_i,f(P_i))$, the polynomial $(z-f)^{p^et}$ has $(P_i,f(P_i))$ as a zero of multiplicity $p^et$. This means that in order to make sure that the points $(P_i,r_i)$ are zeroes of $Q(z)$ with multiplicity at least $p^es$ for $i=1,\dots,n$, we need that $(P_i,r_i)$ is a zero of $\tilde{Q}(z)$ with multiplicity at least $p^e(s-t)$ if $P \not\in \mathcal E$, and a zero of $\tilde{Q}(z)$ with multiplicity at least $p^es$ if $P \in \mathcal E.$ Now let $t_i$ be a local parameter of $P_i$. Since the exponents of $z$ occurring in $\tilde{Q}(z)$ all are a multiple of $p^e$, the Freshman's dream implies that $\tilde{Q}(z)$ can be written in the form $\sum_{a \ge 0} \sum_{b=0}^\ell c_{a,b}^{(i)} t_i^a (z-r_i)^{p^eb},$ for certain $c_{a,b}^{(i)} \in \mathbb{F}_q$ Hence $(P_i,r_i)$ is a zero of $\tilde{Q}(z)$ of multiplicity at least $s$ if and only if $c_{a,b}^{(i)}=0$ for $b=0,\dots,s-1$ and $a=0,\dots,p^e(s-b)-1.$ In particular, having $(P_i,r_i)$ as a zero of multiplicity at least $s$ can be expressed using $p^e \binom{s+1}{2}$ linear equations. Similarly having $(P_i,r_i)$ as a zero of multiplicity at least $s-t$ can be expressed using $p^e \binom{s-t+1}{2}$ linear equations. This means the total number of linear equations for $\tilde{Q}(z)$ equals \[p^e\tau \binom{s+1}{2}+p^e(n-\tau)\binom{s-t+1}{2}.\]

To guarantee the existence of a nonzero $\tilde{Q}(z)$, we simply need that
\[\sum_{j=0}^{\ell-t}\dim L(A-p^e(j+t)G)  > p^e\tau \binom{s+1}{2}+p^e(n-\tau)\binom{s-t+1}{2}.\] Using that $\dim L(A-p^e(j+t)G) \ge \deg(A-p^e(j+t)G)-g+1$, we see that it is sufficient to require
\[\sum_{j=0}^{\ell-t} \deg(A)-g+1-p^e(j+t)\deg G \ge p^e\tau \binom{s+1}{2}+p^e(n-\tau)\binom{s-t+1}{2}+1.\]
Combining this with the requirement that $\deg A \le p^es(n-\tau)-1$, we obtain that a nonzero $\tilde{Q}(z)$, and hence $Q(z)$, exists as long as $\tau \le \tau_t^{(e)}.$

Since $Q(f) \in L(A-\sum_{i: P_i \not\in \mathcal E}p^esP_i)$ and $\deg A< p^es(n-\tau)$, we see $Q(f)=0.$ Moreover, any root $g \in \ffield$ of $Q(z)$ has multiplicity at least $p^e$, since, using the Freshman's dream, it is easy to see that $Q(z+g)$ is divisible by $z^{p^e}$. This implies that $Q(z)$ has at most $\ell$ roots.
\end{proof}

Note that if the designed list size $\ell$ is chosen such that $\deg(A-p^e\ell G) <0$, then $L(A-\ell G)=\{0\}$, meaning that $Q_\ell=0$ for any $Q(z)$ satisfying the conditions in Theorem \ref{thm:GS_new}. To avoid that the leading term of $Q(z)$ cannot be nonzero, we therefore require that the parameters $e$, $s$ and $\ell$ are chosen such that $\deg (A -p^e\ell G) \ge 0$. Since $\deg A < p^esn$, this implies that we may assume that:
\begin{equation}\label{eq:G_nslinv}
\deg G \le s \ell^{-1} n.
\end{equation}
The fact that the polynomial $Q(z)$ can have at most $\ell$ roots, even though its $z$-degree is $p^e \ell$, means that it is still meaningful to call $\ell$ the designed list size, as is usual in the context of the Guruswami-Sudan list-decoder. We call the list-decoder Theorem \ref{thm:GS_new} gives rise to, the \emph{inseparable Guruswami-Sudan list-decoder} (with exponent $e$). The reason for the term ``inseparable'' is that for $e>0$, the polynomials $Q(z)$ occurring in Theorem \ref{thm:GS_new} are inseparable polynomials.

For $e=0$, one obtains the usual Guruswami-Sudan list-decoder as described in Subsectin \ref{subsec:GS}. Also note that for $t=e=0$, one simply recovers the usual list-decoding radius from equation \eqref{eq:GSdec_radius}. There are some other cases, we would like to highlight as corollaries. As stated previously, the most important result is obtained by setting $t=1$.

\begin{corollary}\label{cor:insepGS_t1}
The inseparable Guruswami-Sudan list-decoder with exponent $e$ has list-decoding radius at least
\begin{equation*}
\frac{s(2\ell-s+1)n-\ell(\ell+1)\deg G}{2s(\ell+1)}-\frac{1+g\ell}{p^e s(\ell+1)}.
\end{equation*}
In particular, choosing $e=\lceil \log_p\left(1+g\ell\right)\rceil$, one obtains a list-decoder with list-decoding radius at least
\[\frac{s(2\ell-s+1)n-\ell(\ell+1)\deg G-2}{2s(\ell+1)}.\]
\end{corollary}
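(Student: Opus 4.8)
The plan is simply to specialize Theorem \ref{thm:GS_new} to the case $t=1$ and simplify the resulting expression $\tau_1^{(e)}$. First I would compute the binomial coefficients appearing in $\tau_t^{(e)}$ at $t=1$: we have $\binom{s-t+1}{2}=\binom{s}{2}=\tfrac{s(s-1)}{2}$, $\binom{t}{2}=\binom{1}{2}=0$, and $\binom{\ell+1}{2}=\tfrac{\ell(\ell+1)}{2}$. The denominator common to both terms of $\tau_1^{(e)}$ becomes
\[
\binom{s+1}{2}+s(\ell-1+1)-\binom{s}{2}=\frac{s(s+1)}{2}+s\ell-\frac{s(s-1)}{2}=s\ell+s=s(\ell+1).
\]
So the denominator is exactly $s(\ell+1)$, which already matches the claimed formula.

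Next I would simplify the numerator of the first term. We get
\[
n\left(s\ell-\binom{s}{2}\right)-\binom{\ell+1}{2}\deg G
= n\left(s\ell-\frac{s(s-1)}{2}\right)-\frac{\ell(\ell+1)}{2}\deg G
= \frac{n\bigl(2s\ell-s(s-1)\bigr)-\ell(\ell+1)\deg G}{2},
\]
and $2s\ell-s(s-1)=s(2\ell-s+1)$. Dividing by the denominator $s(\ell+1)$ then yields
\[
\frac{s(2\ell-s+1)n-\ell(\ell+1)\deg G}{2s(\ell+1)},
\]
which is the first term in the statement. For the second term, at $t=1$ the quantity $1+g(\ell-t+1)$ becomes $1+g\ell$, and dividing by $p^e$ times the denominator $s(\ell+1)$ gives exactly $\dfrac{1+g\ell}{p^e s(\ell+1)}$. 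Since $\tau_1^{(e)}$ is a valid lower bound for the list-decoding radius by Theorem \ref{thm:GS_new}, the first displayed formula in the corollary follows.

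For the "in particular" part, I would substitute $e=\lceil\log_p(1+g\ell)\rceil$, so that $p^e\ge 1+g\ell$. Then the second term satisfies
\[
\frac{1+g\ell}{p^e s(\ell+1)}\le\frac{1}{s(\ell+1)}=\frac{2}{2s(\ell+1)},
\]
so subtracting it from the first term gives a value at least
\[
\frac{s(2\ell-s+1)n-\ell(\ell+1)\deg G}{2s(\ell+1)}-\frac{2}{2s(\ell+1)}
=\frac{s(2\ell-s+1)n-\ell(\ell+1)\deg G-2}{2s(\ell+1)},
\]
as claimed. There is no real obstacle here; the only thing to be slightly careful about is that the list-decoding radius is an integer quantity (a Hamming weight), so strictly speaking the bound one gets is the floor of the displayed expression — but since Theorem \ref{thm:GS_new} already phrases the guarantee as $\tau\le\tau_t^{(e)}$ with $\tau$ an integer, the "at least" phrasing in terms of the real-valued bound is exactly the same convention and nothing further is needed.
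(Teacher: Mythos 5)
Your proof is correct and follows exactly the same route as the paper: specialize Theorem \ref{thm:GS_new} to $t=1$, simplify the binomials to get the displayed bound, and then observe $p^e\ge 1+g\ell$ for the chosen $e$. The paper states the first step more tersely ("follows directly from Theorem \ref{thm:GS_new} by putting $t=1$") without spelling out the algebra, but the content is identical.
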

\begin{proof}
The first part follows directly from Theorem \ref{thm:GS_new} by putting $t=1$. The second part follows, since if $e=\lceil \log_p\left(1+g\ell\right)\rceil$, then \[\frac{1+g\ell}{p^e s(\ell+1)} \le \frac{1}{s(\ell+1)} = \frac{2}{2s(\ell+1)}.\]
\end{proof}

Further putting $e=0$, we obtain:

\begin{corollary}\label{cor:GS_decodingrad}
The usual Guruswami-Sudan list-decoder has list-decoding radius at least
\begin{equation*}
\tau \le \frac{s(2\ell-s+1)n-\ell(\ell+1)\deg G -2}{2s(\ell+1)} -\frac{\ell}{\ell+1}\cdot\frac{g}{s}.
\end{equation*}
\end{corollary}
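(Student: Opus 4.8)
The plan is to obtain this statement as the special case $e=0$ of Corollary~\ref{cor:insepGS_t1}, followed by a short algebraic rearrangement. Recall that Corollary~\ref{cor:insepGS_t1} (itself the case $t=1$ of Theorem~\ref{thm:GS_new}) states that for every integer $e \ge 0$ the inseparable Guruswami--Sudan list-decoder with exponent $e$ has list-decoding radius at least
$$\frac{s(2\ell-s+1)n-\ell(\ell+1)\deg G}{2s(\ell+1)}-\frac{1+g\ell}{p^e s(\ell+1)}.$$
As observed in the discussion following Theorem~\ref{thm:GS_new}, taking $e=0$ makes the interpolation polynomials $Q(z)$ separable, so the decoder under consideration is precisely the classical Guruswami--Sudan list-decoder of Subsection~\ref{subsec:GS}. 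Substituting $e=0$, hence $p^e=1$, into the displayed bound therefore shows that the usual GS list-decoder has list-decoding radius at least
$$\frac{s(2\ell-s+1)n-\ell(\ell+1)\deg G}{2s(\ell+1)}-\frac{1+g\ell}{s(\ell+1)}.$$

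It then remains to match this expression with the one in the statement. I would split the subtracted term as $\dfrac{1+g\ell}{s(\ell+1)}=\dfrac{2}{2s(\ell+1)}+\dfrac{g\ell}{s(\ell+1)}$, absorb $-\dfrac{2}{2s(\ell+1)}$ into the first fraction to turn its numerator into $s(2\ell-s+1)n-\ell(\ell+1)\deg G-2$, and rewrite the leftover term as $\dfrac{g\ell}{s(\ell+1)}=\dfrac{\ell}{\ell+1}\cdot\dfrac{g}{s}$. This produces exactly
$$\tau \le \frac{s(2\ell-s+1)n-\ell(\ell+1)\deg G -2}{2s(\ell+1)} -\frac{\ell}{\ell+1}\cdot\frac{g}{s},$$
as claimed.

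There is no real obstacle here: the corollary is a direct specialization of an already proved result together with a rearrangement of fractions. The only point requiring a little care is the bookkeeping of the additive constant, namely that the $-2$ appearing in the numerator is the same constant as the $+1$ produced by the strict inequality in the interpolation-dimension count used to prove Theorem~\ref{thm:GS_new}; the splitting above makes this transparent. As a consistency check one can compare with equation~\eqref{eq:GSdec_radius}: the new bound coincides with the classical one except that the genus penalty $g/s$ is replaced by the slightly smaller $\frac{\ell}{\ell+1}\cdot\frac{g}{s}$, and as $\ell\to\infty$ the factor $\ell/(\ell+1)$ tends to $1$, so for large designed list size the improvement disappears, exactly as one would expect.
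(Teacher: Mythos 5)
Your proposal is correct and matches the paper exactly: the paper introduces Corollary~\ref{cor:GS_decodingrad} with the single remark ``Further putting $e=0$, we obtain:'' after Corollary~\ref{cor:insepGS_t1}, so the intended argument is precisely the specialization $e=0$ together with the fraction rearrangement you carry out. The algebra, namely splitting $\frac{1+g\ell}{s(\ell+1)}=\frac{2}{2s(\ell+1)}+\frac{\ell}{\ell+1}\cdot\frac{g}{s}$ and absorbing the first term into the main numerator, is verified and sound.
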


The list-decoding radius in Corollary \ref{cor:GS_decodingrad} is quite similar to the one mentioned in equation \eqref{eq:GSdec_radius}. The only difference is that the genus penalty is reduced from $g/s$ to $\ell g/s(\ell+1)$. This reduction is not due to any change of algorithms, but because of a slightly more refined analysis. This in particular shows that if $s=\ell=1$ and $e=0$, then the genus penalty is reduced from $g$ to $g/2$. That this is possible, may not come as a surprise, since the decoding algorithm from \cite{SV}, commonly known as the basic algorithm, already has the same decoding radius $(d^*-1-g)/2$. Indeed, the two decoders are closely related. Setting $s=\ell=1$, but keeping $e$ free. In this case one also obtains a bounded distance decoder, with an improved decoding radius compared to the basic algorithm.

\begin{corollary}\label{cor:q_mindistdec}
The inseparable Guruswami-Sudan list-decoder with $s=\ell=1$ and exponent $e$, is a bounded distance decoder for the AG-code $C_L(D,G)$ with decoding radius at least $\frac{d^*}{2}-\frac{1+g}{2p^e}$, where $d^*=n-\deg G$ is the designed minimum distance of $C_L(D,G)$.
In particular, choosing $e=\lceil \log_p\left(1+g\right)\rceil$, one obtains a decoder with decoding radius $(d^*-1)/2$.
\end{corollary}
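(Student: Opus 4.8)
The plan is to obtain this statement as a direct specialization of Corollary~\ref{cor:insepGS_t1} to $s=\ell=1$. First I would substitute $s=\ell=1$ into the list-decoding radius $\frac{s(2\ell-s+1)n-\ell(\ell+1)\deg G}{2s(\ell+1)}-\frac{1+g\ell}{p^e s(\ell+1)}$ from Corollary~\ref{cor:insepGS_t1}: the numerator of the first fraction collapses to $2n-2\deg G$ and its denominator to $4$, while the second fraction becomes $\frac{1+g}{2p^e}$. Using $d^*=n-\deg G$, this simplifies to $\frac{d^*}{2}-\frac{1+g}{2p^e}$, which is the asserted bound. Since the decoder corrects an integer number of errors, ``radius at least $x$'' is to be read as ``corrects any $\lfloor x\rfloor$ errors'', but I would not belabor this point.

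Next I would justify the term ``bounded distance''. By the last sentence of Theorem~\ref{thm:GS_new}, the interpolation polynomial $Q(z)$ has at most $\ell$ roots in $\ffield$; with $\ell=1$ this means at most one candidate message $f$, so the decoder either returns a unique codeword or fails --- which is exactly the behaviour of a bounded distance decoder. No new reasoning about the genus penalty is needed, since for $s=\ell=1$ everything is inherited from Theorem~\ref{thm:GS_new} through Corollary~\ref{cor:insepGS_t1}.

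Finally, for the ``in particular'' claim I would set $e=\lceil\log_p(1+g)\rceil$, so that $p^e\ge 1+g$ and hence $\frac{1+g}{2p^e}\le\frac12$. Substituting into the bound just obtained yields a decoding radius of at least $\frac{d^*}{2}-\frac12=\frac{d^*-1}{2}$, i.e. the decoder corrects at least $\lfloor(d^*-1)/2\rfloor$ errors, which matches the classical benchmark for decoding up to the designed minimum distance.

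I do not anticipate any genuine obstacle: the whole proof is a substitution into an already-established radius formula together with the observation that $\ell=1$ forces list size one. The only mild subtlety is the bookkeeping with floor functions when passing between the real-valued bound $\frac{d^*}{2}-\frac{1+g}{2p^e}$ and the integer number of errors that the decoder actually handles.
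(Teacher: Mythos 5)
Your proposal is correct and takes essentially the same route as the paper: the paper's proof simply sets $\ell=s=t=1$ directly in Theorem~\ref{thm:GS_new}, whereas you specialize Corollary~\ref{cor:insepGS_t1} (which is itself the $t=1$ instance of Theorem~\ref{thm:GS_new}) at $s=\ell=1$; the two are the same substitution performed in a different order, and the arithmetic you carry out matches the paper's. Your added remark that $\ell=1$ forces at most one root of $Q(z)$, and hence a unique-decoding (bounded distance) behaviour, is a sensible clarification that the paper leaves implicit.
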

\begin{proof}
The first part follows by setting $\ell=s=t=1$ in Theorem \ref{thm:GS_new}. The second part follows, since for the given choice of $e$, one has $\frac{1+g}{2p^e} \le \frac12$.
\end{proof}

We will see later what the precise effect of $e$ is on the complexity of the inseparable Guruswami-Sudan list-decoder. As it turns out, the effect is that the complexity is multiplied with the factor $p^e$ (ignoring logarithmic factors). Hence the second part of Corollary \ref{cor:q_mindistdec} can be interpreted as follows: removing the genus penalty $g/2$ can be done by increasing the complexity by a factor $p^{\lceil \log_p\left(1+g\right)\rceil} \approx 1+g$. In \cite{P} and \cite{Vlad1990}, the observation was made that one theoretically can decode up to $\frac{d^*-1}{2}$ many errors by running $2g$ suitably chosen instances of the basic algorithm from \cite{SV}. The element of choice lies in choosing $2g$ divisors $A$ to be used in the basic algorithms. Similarly, we need to choose a divisor $A$ in Theorem \ref{thm:GS_new}. However, the results in \cite{P} and \cite{Vlad1990} only indicate the existence of these $2g$ divisors, but do not explain how to find these divisors explicitly. In that sense Corollary \ref{cor:q_mindistdec} is much easier to use: any choice of the divisor $A$ is fine, as long as it satisfies the requirements from Theorem \ref{thm:GS_new}.

Going back to list-decoding, if in Corollary \ref{cor:insepGS_t1} we choose $e=\lceil \log_p\left(1+g \ell\right)\rceil$, then the decoding radius of the inseparable Guruswami-Sudan list-decoder becomes exactly the same as in equation \eqref{eq:GSdec_radius} for genus zero curves. Hence the genus penalty in the Guruswami-Sudan list-decoder can be removed at the cost of increasing the complexity with a factor $p^e \approx 1+g\ell$. In case $s=1$, the Guruswami-Sudan list-decoder is often called the Sudan list-decoder. In Chapter 3 of \cite{Pan1} (also see \cite{Pan2}), the existence of a Sudan-like list-decoding algorithm was given that removes the genus penalty. Their approach follow the ideas from \cite{P} and \cite{E}. No complexity for their algorithm is stated, so no direct comparison in terms of complexity is available. On the theoretical side, it is worth noting that while in \cite{Pan1} one assumes $s=1$, our approach works for any $s \ge 1$.

\section{Algorithmic aspects of the inseparable GS list-decoder}

In this section, we consider algorithmic aspects of the decoders presented in the previous two sections. We will use the same approach from \cite{BRS}. In fact, at an essential point, we will be able to use algorithms described in \cite{BRS} to determine the complexities of our list-decoder.

As in \cite{BRS}, we assume that there exists a rational place $\Pinf$ not used as evaluation point. At first sight, this seems to restrict the AG codes one can decode, but, as shown in \cite{BRS}, actually no loss of generality occurs by assuming $\Pinf$ exists. The reason is that one can always extend the field $\Fq$ of definition in a modest way so that \emph{after} the extension, the function field contains places of degree one not used as evaluation points. More precisely, as mentioned before, the needed extension degree is logarithmic in $n$ and $g$. Therefore the overall complexity of the decoding algorithms is not affected significantly by this field extension. Actually, in the $\softO$ notation, no effect can be observed at all. As in \cite{BRS}, the complexity-expressions will involve several parameters from the (list)-decoding setup and the used code and also the quantity $\mu$. Here, as in \cite{BRS}, $\mu$ denotes the smallest positive element from the Weierstrass semigroup of $P_\infty$.

As explained in \cite{BRS}, one may assume without loss of generality that the divisor $G$ is effective. 
We will make the same assumption about $\Pinf$ and $G$ from now on. Recall, that we also always work under the assumption that $\deg G \le sn/\ell \le n$, see equation \eqref{eq:G_nslinv}.

The ring $\Ya=\cup_{m \ge 0} L(mP_\infty)$ is an integral domain, being a subset of the function field $\ffield$ of $\curve$. In fact, it is even a Dedekind domain, though not necessarily a principal ideal domain. Given any $\Fq$-rational divisor $A$ on $\curve$, we define $\Ya(A)=\cup_{m \ge 0} L(A+mP_\infty)$. It is a torsion free, rank one module over $\Ya$. However, since $\Ya$ may not be a principal ideal domain, $\Ya(A)$ need not be a free module. It is always possible to generate $\Ya(A)$ by two suitably chosen elements. To indicate the size of there generators, we define for any nonzero $a \in \Ya(A)$, the quantity $\delta_A (a)$ to be the smallest integer $m$ such that $a \in L(m\Pinf + A)$, i.e., $\delta_A(a) = -\val(a) - \val(A)$ and let $\delta(a) = \delta_0(a) = -\val(a)$. We will take as convention that $\delta_A(0) = - \infty$. Note that for any $a \in \Ya(A)$ and $b \in \Ya(B)$, one has $\delta_{A+B}(ab)=\delta_A(a)+\delta_B(b).$ Moreover, if $A=mP_\infty$ for some integer $m$ and $B$ is any divisor, then $\Ya(A+B)=\Ya(B)$. The following lemma is a direct consequence of \cite[Lemma III.1]{BRS}.

\begin{lemma}[\cite{BRS}]\label{lem:YaA-over-Ya}
Let $A=\sum_{i=1}^t n_iQ_i$ be a divisor of $\ffield$ and write $\mathfrak a(A)= \sum_{i} \deg Q_i$.
Then $\Ya(A)$ can be generated as a $\Ya$-module by two elements $a_1$ and $a_2$ satisfying $\delta_A(a_u) \le 4g-1-\deg(A)+\mathfrak a(A)$ for $u=1,2$.
\end{lemma}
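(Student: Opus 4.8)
The plan is to reduce the statement to the structural description of $\Ya(A)$ that is proved in \cite[Lemma III.1]{BRS} and then track the size estimates. First I would recall the basic facts about the ring $\Ya = \cup_{m\ge 0} L(mP_\infty)$: it is a Dedekind domain whose fraction field is $\ffield$, and for any $\Fq$-rational divisor $A$ the module $\Ya(A) = \cup_{m\ge 0} L(A + mP_\infty)$ is a torsion-free rank-one $\Ya$-module. The cited result from \cite{BRS} gives that such a module can always be generated by two elements; the only work here is to see that one may pick the two generators with the claimed bound on $\delta_A(a_u)$.

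The key step is a reduction to an \emph{effective} divisor. Write $A = \sum_{i=1}^t n_i Q_i$. Set $A^+ = \sum_{i:\, n_i>0} n_i Q_i$ and let $B = A^+ - A = \sum_{i:\, n_i<0} (-n_i) Q_i \ge 0$, so that $A = A^+ - B$ with both $A^+, B \ge 0$ and $A^+ \le \sum_i |n_i| Q_i$ in the obvious sense; in particular $\deg A^+ \le $ something controlled, and $\deg B \ge 0$. Since $B$ is effective, $\Ya(A) = \Ya(A^+ - B) \subseteq \Ya(A^+)$, and conversely multiplying by a well-chosen element that vanishes to high enough order along $\mathrm{supp}(B)$ embeds $\Ya(A^+)$ into $\Ya(A)$ up to a controlled shift in $\delta$. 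So it suffices to produce two generators of $\Ya(A^+)$ of the right size and then correct for $B$. Actually the cleanest route is: apply the structural statement of \cite[Lemma III.1]{BRS} directly to get two generators $a_1, a_2$ of $\Ya(A)$, and then bound $\delta_A(a_u)$ by the quantity appearing there, which is governed by $4g - 1$ together with the "complexity" of the divisor, measured by $-\deg(A)$ (so that large negative part of $A$ forces larger generators) and by $\mathfrak a(A) = \sum_i \deg Q_i$, the total degree of the support counted without multiplicity (which controls how much cancellation one might need to arrange at the places in $\mathrm{supp}(A)$).

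Concretely, the estimate $\delta_A(a_u) \le 4g - 1 - \deg(A) + \mathfrak a(A)$ should come out of a Riemann–Roch count: one wants $m$ large enough that $L(mP_\infty + A)$ is nonzero and, more, that it already contains a full generating set for the module; taking $m$ of size roughly $2g - 1 - \deg(A + mP_\infty)$-type bounds, combined with needing to separate the $t$ places in the support (each costing up to $\deg Q_i$ in the worst case), yields a bound of the shape $cg + (\text{something}) - \deg A + \mathfrak a(A)$, and the constant $c = 4$ with the additive $-1$ is exactly what \cite[Lemma III.1]{BRS} records. So the proof is essentially: invoke \cite[Lemma III.1]{BRS} with $A$ in the role of its divisor, read off that the two generators satisfy $\delta_A(a_u) \le 4g - 1 - \deg(A) + \mathfrak a(A)$, and note that $\mathfrak a(A) = \sum_i \deg Q_i$ matches the quantity defined in the statement.

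The main obstacle — and the only real content beyond citation — is making sure the normalization in \cite[Lemma III.1]{BRS} matches: that paper may state the bound for $\Ya$-modules presented in a slightly different form (e.g. for $\Ya(A)$ with $A$ effective, or in terms of a valuation bound rather than $\delta_A$), so I would need to carefully translate between their $\delta$ (or degree) conventions and the $\delta_A(a) = -\val(a) - \val(A)$ used here, and check that the worst-case contribution of the support really is $\mathfrak a(A)$ and not, say, $2\mathfrak a(A)$ or $\deg A^+$. If the cited lemma is stated only for effective divisors, I would insert the reduction $A = A^+ - B$ described above, apply it to $A^+$, and verify that clearing $B$ changes $\delta$ by at most $\deg B \le \mathfrak a(A) - (\text{stuff})$ in a way that still fits under the stated bound. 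Beyond that bookkeeping, no new ideas are needed; the lemma is explicitly flagged as "a direct consequence" of the \cite{BRS} result.
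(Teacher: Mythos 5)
The paper gives no proof of this lemma at all: it is labeled as taken from \cite{BRS} and the surrounding text says only that it ``is a direct consequence of \cite[Lemma III.1]{BRS}.'' Your plan --- invoke that cited lemma, translate the $\delta_A$ and degree conventions, and if necessary reduce to the effective case --- is exactly the approach the paper implicitly takes, so on that score you match; the extra speculation about Riemann--Roch counts and the $A = A^+ - B$ decomposition is a plausible reconstruction of what happens \emph{inside} \cite[Lemma III.1]{BRS}, but it is not content the present paper supplies or needs, and you are right to flag it as unverified bookkeeping rather than a new argument.
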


Note that $\mathfrak a (A+B) \le \mathfrak a(A)+\mathfrak a(B)$ for any two divisors $A$ and $B$, and that equality holds if the supports of $A$ and $B$ are disjoint. In particular $\mathfrak a (A+B)$ can be large even if $\deg (A+B)$ is small. However, for an effective divisor $A$, we have $\mathfrak a(A) \le \deg A$ and $\mathfrak a(-A) \le \deg A$.

Given divisors $D=P_1+\dots+P_n$ and $G$ used to define the code $C_L(D,G)$ and a divisor $A$ used for list-decoding as in Theorem \ref{thm:GS_new}, we define
\[\MGS^{(e)} = \left\{\sum_{j=0}^\ell Q_j z^{p^ej} \in \ffield[z] \mid Q_j \in \Ya(A-p^e jG); \ Q(P_i,r_i)=0 \text{ with multiplicity at least $p^es$}\right\}.\]
Note if additionally, one knows that $Q_j \in L(A-p^e tG)$, one obtains the polynomials discussed in Theorem \ref{thm:GS_new}. For any $Q(z) = \sum_{j = 0}^\ell Q_j z^{p^ej}$ with $Q_j \in \Ya(A-p^ejG)$ we define $\delta_{G,A}^{(e)}(Q(z)) = \max_j \delta_{A-p^ejG}(Q_j)$. Note that $\delta_{G,A}^{(e)}(Q(z)) \le 0$, precisely if $\delta_{A-p^ejG}(Q_j) \le 0$ for all $j$, or equivalently, precisely if $Q_j \in L(A-p^ejG)$ for all $j$. In particular, the polynomials satisfying the requirements in Theorem \ref{thm:GS_new} are precisely those polynomials $Q(z) \in \MGS^{(e)}$ satisfying $\delta_{G,A}^{(e)}(Q) \le 0.$

The following theorem describes the structure of $\MGS^{(e)}$ as a $\Ya$-module. For $e=0$ and $A$ a multiple of $P_\infty$, one obtains Theorem IV.4 from \cite{BRS}. The proof of the following theorem is a direct modification of the proof of Theorem IV.4 as given in \cite{BRS}, but for the sake of completeness, we include it. In this subsection, we could restrict to the case that $A$ is a multiple of $P_\infty$, but further on in the paper, the divisor $A$ will not always have that form.

\begin{theorem}
  \label{thm:M-description}
Let $\vec{r}=(r_1,\dots,r_n)$ be a received word and $R \in \Ya(G)$ be such that $R(P_i) = r_i$ for $i = 1,\dots,n$. Then it holds that
  \[
    \MGS^{(e)} = \bigoplus_{j=0}^{s}(z-R)^{p^ej} \Ya(A+p^eG_j) \oplus \bigoplus_{j=s+1}^{\ell}(z-R)^{p^es}z^{p^e(j-s)} \Ya(A+p^eG_j),
  \]
where $G_j = -jG - \max\{0,s-j\}D$ for $j = 0,\dots,\ell$.
\end{theorem}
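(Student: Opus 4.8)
The plan is to show both inclusions after translating the multiplicity condition by the substitution $z \mapsto z + R$. First I would observe that the map $Q(z) \mapsto Q(z+R)$ is a $\Ya$-linear bijection on $\ffield[z]$, so I should understand what the module $\MGS^{(e)}$ becomes after this translation. Writing $w = z - R$, a polynomial $Q(z) = \sum_{j=0}^\ell Q_j z^{p^e j}$ with $Q_j \in \Ya(A - p^e j G)$ becomes a polynomial in $w$ whose coefficients still lie in the appropriate $\Ya$-modules (using $R \in \Ya(G)$ and the multiplicativity $\delta_{A+B}(ab) = \delta_A(a) + \delta_B(b)$ together with $\Ya(A - p^ejG) \cdot R^k \subseteq \Ya(A - p^e(j-k)G)$); moreover, since every exponent of $z$ that appears is a multiple of $p^e$, the Freshman's dream gives $z^{p^ej} = (w + R)^{p^ej} = \sum_k \binom{j}{k} w^{p^ek} R^{p^e(j-k)}$, so the translated polynomial again only involves powers $w^{p^ek}$. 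The key point, exactly as in the proof of the root-multiplicity reformulation inside Theorem \ref{thm:GS_new}, is that $Q(z)$ vanishes at $(P_i, r_i)$ with multiplicity at least $p^e s$ if and only if the translated polynomial $\sum_k \tilde Q_k w^{p^ek}$ satisfies $v_{P_i}(\tilde Q_k) \ge p^e(s-k)$ for $k = 0,\dots,s-1$, i.e.\ $\tilde Q_k$ vanishes at $P_i$ to order at least $p^e\max\{0, s-k\}$ for every $i$. Equivalently $\tilde Q_k \in \Ya(A - p^ekG - p^e\max\{0,s-k\}D) = \Ya(A + p^e G_k)$ for all $k$ (for $k > s$ there is no divisibility requirement beyond the membership already forced). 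Thus after translation $\MGS^{(e)}$ is exactly $\bigoplus_{k=0}^{\ell} w^{p^ek}\,\Ya(A + p^e G_k)$, and translating back by $w = z - R$, splitting the sum at $k = s$ and writing $w^{p^ek} = (z-R)^{p^e s}\cdot(z-R)^{p^e(k-s)}$... wait — that last rewriting is not literally $(z-R)^{p^es}z^{p^e(k-s)}$, so the genuine content of the theorem is that $(z-R)^{p^e(k-s)}\Ya(A+p^eG_k)$ and $z^{p^e(k-s)}\Ya(A+p^eG_k)$ span the same $\Ya$-submodule for $k > s$.

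The main obstacle, therefore, is justifying this replacement of $(z-R)^{p^e(k-s)}$ by $z^{p^e(k-s)}$ in the high-degree part $s+1 \le k \le \ell$, and checking the whole expression is a genuine direct sum. For the first issue I would argue by downward induction on $k$ from $\ell$ to $s+1$: expanding $(z-R)^{p^e(k-s)}$ by the Freshman's dream as $z^{p^e(k-s)} + (\text{lower powers of } z^{p^e}\text{ with coefficients divisible by }R)$, and noting $R\cdot\Ya(A+p^eG_k) \subseteq \Ya(A + p^eG_{k-1})$ since $G_{k-1} - G_k \ge -G$ has the right sign — so each lower-order correction term already lies in the span of the summands with smaller index, which by induction (and the $k \le s$ part, where $(z-R)^{p^ek}$ is already in the claimed form) are accounted for. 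This shows the two right-hand sides of the asserted identity generate the same $\Ya$-module. For directness of the sum I would use the $z$-degrees: the summand indexed by $k$ contains only terms of $z$-degree in the range $[p^ek, p^e\ell]$ with a nonzero contribution in degree exactly $p^ek$ (coming from the leading term $(z-R)^{p^ek}$ or $(z-R)^{p^es}z^{p^e(k-s)}$), so a $\Ya$-linear relation among elements of the summands, read off degree by degree starting from the top, forces each coefficient to be zero; here one uses that $\Ya(A + p^eG_k)$ is torsion-free over the domain $\Ya$.

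Finally I would assemble the two inclusions. For ``$\supseteq$'': each generator $(z-R)^{p^ej}\Ya(A+p^eG_j)$ for $j \le s$ and $(z-R)^{p^es}z^{p^e(j-s)}\Ya(A+p^eG_j)$ for $j > s$ lies in $\MGS^{(e)}$ — the coefficient-membership conditions $Q_j \in \Ya(A - p^ejG)$ hold because $G_j \le -jG$, and the multiplicity-at-least-$p^es$ condition at each $(P_i, r_i)$ holds because, after translating by $R$, the polynomial is $w^{p^ej}$ times something in $\Ya(A+p^eG_j) \subseteq \Ya(A - p^ejG - p^e\max\{0,s-j\}D)$, which vanishes at each $P_i$ to the order recorded above (for $j \le s$, the factor $w^{p^ej}$ alone supplies multiplicity $p^ej$ and the coefficient supplies the remaining $p^e(s-j)$; for $j > s$, $w^{p^es}$ already supplies $p^es$). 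For ``$\subseteq$'': given $Q(z) \in \MGS^{(e)}$, translate by $R$ to land in $\bigoplus_k w^{p^ek}\Ya(A+p^eG_k)$ by the root-multiplicity analysis above, then translate back and use the generation statement from the previous paragraph to rewrite it in terms of the claimed generators. I expect the induction-on-degree bookkeeping in the $k > s$ part to be the only place requiring genuine care; everything else is a routine transcription of the argument already used in proving Theorem \ref{thm:GS_new}, now carried out over $\Ya$ rather than over $\ffield$.
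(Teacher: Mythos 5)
Your proof is correct, and it takes a genuinely different route from the paper's, so it is worth comparing the two. Both arguments hinge on the substitution $z \mapsto z-R$ and the Freshman's dream, but they are organized very differently. The paper proves the forward inclusion directly, then handles the reverse inclusion in two stages: first it reduces the case $e>0$ to the case $e=0$ by embedding $\MGS^{(e)}$ into $\mathcal{M}_{p^es,p^e\ell}(D,G,A)^{(0)}$ and discarding the indices $a$ that are not multiples of $p^e$, and then it proves the $e=0$ case by induction on $s$, peeling off the constant term $\overline{Q}_0 = Q(R) \in \Ya(A+G_0)$ and dividing by $(z-R)$. Your proof instead works with general $e$ throughout and replaces the induction on $s$ by a single, global identification: after translating by $R$, the module is $\bigoplus_{k} w^{p^ek}\,\Ya(A+p^eG_k)$, because the multiplicity-$\geq p^es$ condition at each $(P_i,r_i)$ is equivalent to $v_{P_i}(\tilde Q_k) \geq p^e\max\{0,s-k\}$ for all $k$ and all $i$. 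That equivalence is the crux of your argument; it is not literally stated in the paper (whose corresponding observation is the local expansion in $t_i^a(z-r_i)^{p^eb}$ inside the proof of Theorem \ref{thm:GS_new}, and the single coefficient $\tilde Q_0 = Q(R) \in \Ya(A+G_0)$ in the proof of Theorem \ref{thm:M-description}), and it does require a short downward induction on $k$ because the expansions around $(z-R)$ and around $(z-r_i)$ differ; you should make that verification explicit. What your approach buys is a structurally cleaner argument: the translated module has an obvious direct-sum description, and the mixed form $(z-R)^{p^es}z^{p^e(j-s)}$ for $j>s$ is recovered by a change of generators, no induction on $s$ and no detour through $e=0$.

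Two phrasing corrections. The assertion that ``$(z-R)^{p^e(k-s)}\Ya(A+p^eG_k)$ and $z^{p^e(k-s)}\Ya(A+p^eG_k)$ span the same $\Ya$-submodule for $k>s$'' is false summand-by-summand; what is true (and what your subsequent expansion actually proves) is that the two families of summands, taken together over all indices, generate the same module, since the correction terms from $R^{p^e m}\Ya(A+p^eG_k) \subseteq \Ya(A+p^eG_{k-m})$ land in lower-index summands. Also, in the directness argument, the $j$-th summand does not merely have degrees in the range $[p^ej,p^e\ell]$: every nonzero element of the $j$-th summand has $z$-degree exactly $p^ej$, and this stronger fact is what makes the top-degree-downward elimination immediate.
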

\begin{proof}
Note that for all $i$ and all $h \in \Ya(p^eG_j)$, $v_{P_i}(h) \ge p^e\max\{0,s-j\}$. Further $(z-R)^{p^e j}$ has a root of multiplicity $p^e j$ at $(P_i,r_i)$, since $R(P_i)=r_i$. Hence any element in $(z-R)^{jp^e} \Ya(A+p^e G_j)$ has a root of multiplicity at least $p^e s$ at $(P_i,r_i)$.
Moreover, using the Freshman's dream and the fact that $R \in \Ya(G)$, we see that $(z-R)^{p^ej} \Ya(A+p^eG_j)=\left(\sum_{u=0}^j z^{p^e u}\binom{j}{u}(-R)^{p^e(j-u)}\right)\Ya(A+p^e G_j) \subseteq \bigoplus_{u=0}^jz^{p^e u} \Ya(A+p^e G_u)$. Hence $(z-R)^{p^e j} \Ya(A+p^e G_j) \subseteq \MGS^{(e)}$ for $j=0,\dots,s$, while for $j=s+1,\dots,\ell$ one similarly obtains $(z-R)^{p^e s} z^{p^e(j-s)} \Ya(A+p^e G_j) \subseteq \MGS^{(e)}$. All in all, we may conclude that
\[\bigoplus_{j=0}^{s}(z-R)^{p^e j} \Ya(A+p^e G_j) \oplus \bigoplus_{j=s+1}^{\ell}(z-R)^{p^e s}z^{p^e(j-s)} \Ya(A+p^e G_j) \subseteq  \MGS^{(e)}.\]

We will now prove the reverse inclusion
\begin{equation}\label{eq:inclusion}
\MGS^{(e)} \subseteq  \bigoplus_{j=0}^{s}(z-R)^{p^e j} \Ya(A+p^e G_j) \oplus \bigoplus_{j=s+1}^{\ell}(z-R)^{p^e s}z^{p^e(j-s)} \Ya(A+p^e G_j).
\end{equation}
First of all, let us assume that the theorem is true for $e=0$. Note $\MGS^{(e)} \subseteq \mathcal{M}_{p^e s, p^e \ell}(D,G,A)^{(0)}$. Then, using the assumption that the theorem is true for $e=0$,
\[\mathcal{M}_{p^e s, p^e \ell}(D,G,A)^{(0)} =\bigoplus_{a=0}^{p^e s}(z-R)^{a} \Ya(A+H_a) \oplus \bigoplus_{a=p^es+1}^{p^e \ell}(z-R)^{p^es}z^{a-p^es} \Ya(A+H_a),\] with $H_a = -aG - \max\{0,p^es-a\}D$ for $a = 0,\dots,p^e\ell$. Since by definition, the $z$-degrees occurring in an element of $\MGS^{(e)}$ are all multiples of $p^e$, we obtain that
\[\MGS^{(e)} \subseteq \bigoplus_{j=0}^{s}(z-R)^{p^e j} \Ya(A+H_{p^e j}) \oplus \bigoplus_{j=s+1}^{\ell}(z-R)^{p^e s} z^{p^e(j-s)} \Ya(A+H_{p^e j}).\] Using that $H_{p^e j}=p^e G_j$, the inclusion from equation \eqref{eq:inclusion} follows.

What remains to be shown is the inclusion
\begin{equation}\label{eq:inclusion0}
\MGS^{(0)} \subseteq  \bigoplus_{j=0}^{s}(z-R)^{j} \Ya(A+G_j) \oplus \bigoplus_{j=s+1}^{\ell}(z-R)^{s}z^{j-s} \Ya(A+G_j).
\end{equation}
We will prove this using induction on $s$.
Let $Q(z)=\sum_{j = 0}^{\ell}Q_jz^j \in \MGS^{(0)}$ and write $Q(z)=\sum_{j = 0}^{\ell}\tilde{Q}_j(z-R)^j$ for certain $\tilde{Q}_j \in F$. Writing $z^j=((z-R)+R)^j$ and using Newton's binomium, we obtain
\[\tilde{Q}_j=\sum_{u=j}^\ell \binom{u}{j}R^{u-j} Q_u \in \Ya(A-jG), \text{ for $j=0,\dots,\ell$,}\]
since $R \in \Ya(G).$
Hence $Q(z)=\sum_{j = 0}^{s}\overline{Q}_j(z-R)^j + \sum_{j = s+1}^{\ell}\overline{Q}_j(z-R)^s z^{j-s}$, with
\[\overline{Q}_j=\tilde{Q}_j \text{ for $j=0,\dots,s-1$ and } \overline{Q}_j=\sum_{u=j}^{\ell}\tilde{Q}_u \binom{u-s}{j-s}(-R)^{u-j} \text{ for $j=s,\dots,\ell$.}\]
Note that $\overline{Q}_j \in \Ya(A-jG)$ for $j=0,\dots,\ell$. Also observe that $\overline{Q}_0=\tilde{Q}_0=Q(R) \in \Ya(A+G_0),$ since $Q(z)$ has $(P_i,r_i)$ as a zero of multiplicity at least $s$ and $R(P_i)=r_i$.

Now if we assume $s=1$, then $\Ya(A+G_j)=\Ya(A-jG)$ for $j>0$ and we can conclude from the above that $Q(z) \in \bigoplus_{j=0}^{s}(z-R)^j \Ya(A+G_j) \oplus \bigoplus_{j=s+1}^{\ell}(z-R)^sz^{j-s} \Ya(A+G_j)$.

If $s>1$, we proceed as follows: using $\overline{Q}_0 \in \Ya(A+G_0) \subseteq \MGS^{(0)}$, we conclude
\[\MGS^{(0)} \ni Q(z)-\overline{Q}_0=(z-R) \cdot \left(\sum_{j = 0}^{s-1}\overline{Q}_{j+1}(z-R)^j + \sum_{j = s}^{\ell-1}\overline{Q}_{j+1}(z-R)^{s-1}z^{j-s+1}\right) \, .\]
Since $z-R$ has a root of multiplicity one at $(P_i,r_i)$ for all $i$, we see that $$\sum_{j = 0}^{s-1}\overline{Q}_{j+1}(z-R)^j + \sum_{j = s}^{\ell-1}\overline{Q}_{j+1}(z-R)^{s-1}z^{j-s+1}$$ has a root of multiplicity at least $s-1$ at $(P_i,r_i)$ for all $i$. Hence $$\sum_{j = 0}^{s-1}\overline{Q}_{j+1}(z-R)^j + \sum_{j = s}^{\ell-1}\overline{Q}_{j+1}(z-R)^{s-1}z^{j-s+1} \in \mathcal M_{s-1,\ell-1}(D,G,A)^{(0)}.$$ Then using the induction hypothesis for $s-1$, we may conclude that  $Q(z) \in \bigoplus_{j=0}^{s}(z-R)^{j} \Ya(A+G_j) \oplus \bigoplus_{j=s+1}^{\ell}(z-R)^{s}z^{j-s} \Ya(A+G_j)$ so that the inclusion in equation \eqref{eq:inclusion0} follows.
\end{proof}

\begin{corollary}\label{cor:sizegenM}
For $j=0,\dots,\ell$, choose $\{g_1^{(j)},g_2^{(j)}\}$ a generating set of the $\Ya$-module $\Ya(A+p^eG_j)$ satisfying
\[\delta_{A+p^eG_j}(g_u^{(j)}) \le 4g-1+(p^ej+1)\deg G+p^e\max\{0,s-j+1\}n-\deg A+\mathfrak a(A)\]
for $u=1,2$. Further, choose $R \in \Ya(G)$ such that $\delta_G(R) \le n+2g-1-\deg G$.
Then
\[\{(z-R)^{p^ej} g_u^{(j)} \mid j=0,\dots,s; \ u=1,2\} \cup \{(z-R)^{p^es}z^{p^e(j-s)} g_u^{(j)} \mid j=s+1,\dots,\ell; \ u=1,2\}\]
is a generating set of the $\Ya$-module $\MGS^{(e)}$ and moreover for any $j=0,\dots,s$ and $u=1,2$,
\[\delta_{G,A}((z-R)^{p^ej} g_u^{(j)}) \le (p^e(j+s)+2)n+(p^ej+3)(2g-1)+1 - \deg A +\mathfrak a(A),\]
while for any $j=s+1,\dots,\ell$ and $u=1,2$,
\[\delta_{G,A}((z-R)^{p^es}z^{p^e(j-s)} g_u^{(j)}) \le 2p^esn+(p^es+2)(2g-1)+1 - \deg A +\mathfrak a(A).\]
\end{corollary}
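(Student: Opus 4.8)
The plan is to combine Theorem \ref{thm:M-description}, which gives the explicit direct-sum decomposition of $\MGS^{(e)}$, with the size estimates from Lemma \ref{lem:YaA-over-Ya}. First I would observe that by Theorem \ref{thm:M-description}, a generating set of $\MGS^{(e)}$ as a $\Ya$-module is obtained by taking, for each $j$, a generating set of $\Ya(A+p^eG_j)$ and multiplying it by the appropriate ``shift'' factor: $(z-R)^{p^ej}$ for $j=0,\dots,s$, and $(z-R)^{p^es}z^{p^e(j-s)}$ for $j=s+1,\dots,\ell$. This immediately yields that the displayed set of $2(\ell+1)$ polynomials generates $\MGS^{(e)}$. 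The existence of generators $g_u^{(j)}$ with the stated bound on $\delta_{A+p^eG_j}$ follows from Lemma \ref{lem:YaA-over-Ya} applied to the divisor $A+p^eG_j = A - p^ejG - p^e\max\{0,s-j\}D$: one plugs in $\deg(A+p^eG_j) = \deg A - p^ej\deg G - p^e\max\{0,s-j\}n$ and $\mathfrak a(A+p^eG_j) \le \mathfrak a(A) + p^ej\deg G + p^e\max\{0,s-j\}n$ (using subadditivity of $\mathfrak a$, that $\mathfrak a(-p^ejG) \le p^ej\deg G$ since $G$ is effective, and that $\mathfrak a(-p^e\max\{0,s-j\}D) = p^e\max\{0,s-j\}n$ since $D$ is a sum of rational places). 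Substituting these into $4g-1-\deg(A+p^eG_j)+\mathfrak a(A+p^eG_j)$ gives the claimed bound (with $s-j+1$ in place of $s-j$ coming from a slightly generous rounding, which is harmless). Similarly, the existence of $R \in \Ya(G)$ with $R(P_i)=r_i$ and $\delta_G(R) \le n+2g-1-\deg G$ follows because the evaluation map $L(G+(n+2g-1-\deg G)P_\infty) \to \Fq^n$ is surjective: the kernel is $L(G+(n+2g-1-\deg G)P_\infty - D)$, whose divisor has degree $\ge 2g-1$, so by Riemann-Roch the image has dimension $\ell(G+(n+2g-1-\deg G)P_\infty) - \ell(G+(n+2g-1-\deg G)P_\infty - D) = n$.

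It then remains to bound $\delta_{G,A}^{(e)}$ of each generator. For $j=0,\dots,s$, I would use that $\delta_{G,A}^{(e)}((z-R)^{p^ej}g_u^{(j)}) = \max_{0 \le v \le j} \delta_{A-p^evG}\big(\text{($v$-th coefficient of }(z-R)^{p^ej}g_u^{(j)})\big)$; by the Freshman's dream, $(z-R)^{p^ej} = \sum_{v=0}^j \binom{j}{v}(-R)^{p^e(j-v)} z^{p^ev}$, so the $v$-th coefficient is $\binom{j}{v}(-R)^{p^e(j-v)}g_u^{(j)} \in \Ya(A+p^eG_j + p^e(j-v)G)$, and using $\delta_{A+B}(ab) = \delta_A(a)+\delta_B(b)$ one gets $\delta_{A-p^evG}\big(\binom{j}{v}(-R)^{p^e(j-v)}g_u^{(j)}\big) \le p^e(j-v)\delta_G(R) + \delta_{A+p^eG_j}(g_u^{(j)})$. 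Maximizing over $v$ (the worst case is $v=0$, contributing $p^ej\delta_G(R)$) and inserting the two bounds above for $\delta_G(R)$ and $\delta_{A+p^eG_j}(g_u^{(j)})$, a direct computation collapses to $(p^e(j+s)+2)n + (p^ej+3)(2g-1)+1-\deg A+\mathfrak a(A)$. For $j=s+1,\dots,\ell$, the factor is $(z-R)^{p^es}z^{p^e(j-s)}$; here the coefficients of this polynomial lie in $\Ya(A+p^eG_j + p^e(s-v)G)$ for $v$ in the relevant range, and since $G_j = -jG$ (no $D$-term when $j \ge s$) the worst case again comes from the lowest occurring power of $z$, yielding $2p^esn + (p^es+2)(2g-1)+1-\deg A+\mathfrak a(A)$; note that $j$ has dropped out, consistent with the shift factor being $(z-R)^{p^es}$ times a pure power of $z$, which contributes nothing to $\delta$.

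The main obstacle, and the step requiring the most care, is the bookkeeping in the last paragraph: correctly tracking which Riemann-Roch space each coefficient of the shifted generator lands in, and then verifying that the arithmetic of combining the $\mathfrak a$- and $\deg$-terms really collapses to the stated closed form. In particular one must be careful that $\mathfrak a(A+p^eG_j)$ is bounded using subadditivity in the direction that helps (it can be \emph{much} larger than $|\deg(A+p^eG_j)|$ in general, but here the $-p^ejG$ and $-p^e\max\{0,s-j\}D$ pieces have controlled $\mathfrak a$ because $G$ is effective and $D$ is reduced), and that the $+1$ and rounding slack (e.g. $s-j+1$ versus $s-j$) are absorbed rather than violated. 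Everything else is a routine substitution.
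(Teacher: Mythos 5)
Your overall strategy matches the paper's: combine Theorem \ref{thm:M-description} (for the generating set) with Lemma \ref{lem:YaA-over-Ya} (for the size of the $g_u^{(j)}$), then expand the shift factor via the Freshman's dream and track $\delta$ coefficient by coefficient, maximizing over the exponent of $z$. However, there is a genuine error in your computation of $\mathfrak a(A+p^eG_j)$.

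The quantity $\mathfrak a$ records only the (unweighted) degree of the support, so $\mathfrak a(mH)=\mathfrak a(H)$ for \emph{any} nonzero integer $m$; it does not scale with the multiplicity. In particular $\mathfrak a\bigl(-p^e\max\{0,s-j\}D\bigr)=\mathfrak a(D)=n$, not $p^e\max\{0,s-j\}n$ as you assert, and $\mathfrak a(-p^ejG)=\mathfrak a(G)\le\deg G$, not merely $\le p^ej\deg G$. Plugging your inflated values into $4g-1-\deg(A+p^eG_j)+\mathfrak a(A+p^eG_j)$ yields
\[
4g-1-\deg A+\mathfrak a(A)+2p^ej\deg G+2p^e\max\{0,s-j\}n,
\]
which does \emph{not} collapse to the stated bound $4g-1+(p^ej+1)\deg G+p^e\max\{0,s-j+1\}n-\deg A+\mathfrak a(A)$. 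For instance at $j=1$, $s\ge 2$ your expression exceeds the stated one by $(p^ej-1)\deg G+p^e(s-j-1)n\ge 0$, which is strictly positive as soon as $p^e>1$. So the remark that "the $s-j+1$ in place of $s-j$ is a slightly generous rounding, which is harmless" is incorrect: your derived bound is \emph{larger} than what the corollary claims, hence does not establish that such generators exist. With the correct values $\mathfrak a(A+p^eG_j)\le\mathfrak a(A)+\deg G+n$ (using disjointness of supports and effectiveness of $G$), the computation gives $4g-1-\deg A+\mathfrak a(A)+(p^ej+1)\deg G+(p^e(s-j)+1)n$, which is indeed $\le$ the stated bound since $p^e\ge 1$.

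A smaller point: your justification that "the worst case is $v=0$, contributing $p^ej\delta_G(R)$" implicitly assumes $\delta_G(R)\ge 0$, which is not given. The clean route, as the paper takes, is to first substitute the upper bound $\delta_G(R)\le n+2g-1-\deg G$ into the $t$-dependent expression and only then argue that $t=0$ maximizes it, using $\deg G\le n+2g-1$ (and for $j>s$, equation \eqref{eq:G_nslinv}).

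Everything else, including your identification of which $\Ya$-module each coefficient lands in, the multiplicativity $\delta_{A+B}(ab)=\delta_A(a)+\delta_B(b)$, and the observation that $j$ drops out for $j>s$, is correct and follows the paper's route.
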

\begin{proof}
First of all, Lemma \ref{lem:YaA-over-Ya} implies the existence of the indicated $g_1^{(j)}$ and $g_2^{(j)}$. The evaluation map $\ev_D: L(G+(n+2g-1-\deg G)\Pinf) \to \Fq^n$ defined by $\ev_D(f)=(f(P_1),\dots,f(P_n))$ is surjective, since
$$\dim \im(\ev_D) = \dim L(G+(n+2g-1-\deg G)\Pinf) - \dim L(G+(n+2g-1-\deg G)\Pinf-D) = (n+g)-(g)=n.$$
Therefore the indicated $R$ also exists. The fact that the indicated set 
is a generating set of the $\Ya$-module $\MGS^{(e)}$ follows from Theorem \ref{thm:M-description}.

The last parts follows by a direct calculation from the previous. Indeed, for $j=0,\dots,s$ and $u=1,2$, the coefficient of $z^{p^et}$ in $(z-R)^{p^ej} g_u^{(j)}$  equals $\binom{j}{t}(-R)^{p^e(j-t)}g_u^{(j)}.$
This implies that \[\delta_{A-p^etG}(\binom{j}{t}(-R)^{p^e(j-t)}g_u^{(j)}) \le (p^e(j-t+s)+1)n+(p^e(j-t)+2)(2g-1)+1+(p^et+1)\deg G - \deg A +\mathfrak a(A).\]
Since $\deg G \le n+2g-1$, we see that the maximum is attained for $t=0$, whence
\[\delta_{G,A}((z-R)^{p^ej} g_u^{(j)}) \le (p^e(j+s)+1)n+(p^ej+2)(2g-1)+1+\deg G - \deg A +\mathfrak a(A).\] This implies the stated upper bound for $\delta_{G,A}((z-R)^{p^ej} g_u^{(j)}).$

For $j=s+1,\dots,\ell$ and $u=1,2$, the coefficient of $z^{p^e(t+j-s)}$ in $(z-R)^{p^es}z^{p^e(j-s)} g_u^{(j)}$  equals $\binom{s}{t}(-R)^{p^e(s-t)}g_u^{(j)}.$ From a similar computation as above, but this time using that $G_j=-jG$ and $\deg G \le s \ell^{-1} n$, see equation \eqref{eq:G_nslinv}, the result follows directly.
\end{proof}

At this point, the decoding algorithm described in \cite{BRS} can nearly be applied verbatim, especially if we set $A=0$. In fact for $e=0$ and $A=0$, we are precisely in the setting of \cite{BRS}, where it was shown the resulting list-decoder can be performed in $\softO(s \ell^\omega \mu^{\omega-1}(n+g))$ operations. The main approach in \cite{BRS}, following a similar point of view in \cite{LO1,LO2,BB}, is to view the $\Ya$-module $\mathcal M_{s,\ell}(D,G,0)^{(0)}$ as a module over $\Fq[x]$, where $x \in F$ is a suitably chosen element having a pole at $P_\infty$. The rank of $\mathcal M_{s,\ell}(D,G,0)^{(0)}$ as a $\Ya$-module is $\ell+1$, but as an $\Fq[x]$-module, the rank becomes $\mu(\ell+1)$. Using the generators found in Corollary \ref{cor:sizegenM}, one can then express the module as the row space of a $2(\ell+1)\mu \times (\ell+1)\mu$ matrix $\mathbf{M}^{(0)}$ with entries from $\Fq[x]$. Each element from the row space of this matrix corresponds to an element of $\mathcal M_{s,\ell}(D,G,0)^{(0)}$. The degrees of the polynomials occurring as entries of $\mathbf{M}^{(0)}$, are directly related to the bounds on value $\delta_{G,0}$ takes on the $\ell+1$ generators of $\mathcal M_{s,\ell}(D,G,0)^{(0)}$ stated in Corollary \ref{cor:sizegenM}. Roughly speaking one simply divides the bound from Corollary \ref{cor:sizegenM} by $\mu$, but for the details see \cite{BRS}.

The most costly parts of the decoding algorithm from \cite{BRS} then are as follows:
\begin{enumerate}
\item The computation of the matrix $\mathbf{M}^{(0)}$. This involves among others choosing particularly nice $\Fq[x]$-bases for $\Ya$ and the the $\Ya$-modules $\Ya(G_j)$ and algorithms to compute products of elements from $\Ya$ and $\Ya(G_j)$. In fact, Algorithm 4 in \cite{BRS} is able to compute the product of the $\mu$ $\Fq[x]$-basis elements of $\Ya$ with a given element $a \in \Ya(B)$ for any divisor $B$, in complexity $\softO(\mu^{\omega-1}(\deg B + g+\delta_B(a)))$. Then in Algorithm 5 from \cite{BRS} the products needed to obtain the matrix $\mathbf{M}^{(0)}$ are computed, giving a total complexity of this step $\softO(s\ell^2\mu^{\omega-1}(n+g))$. See Proposition 5.23 and Remark 5.24 in \cite{BRS}.
\item Finding a suitable normal form of $\mathbf{M}^{(0)}$ with the property that one of its rows corresponds to a nonzero element of $\mathcal M_{s,\ell}(D,G,0)^{(0)}$ with lowest possible $\delta_{G,0}$ value. This element then gives rise to the polynomial $Q(z)$ used in the Guruswami-Sudan list-decoder. Here the main ingredient is the computation of the mentioned suitable normal form of $\mathbf{M}^{(0)}$, which turns out to be a certain Popov form of $\mathbf{M}^{(0)}$ with respect to suitably chosen weights. The complexity of this step depends on the size of the matrix (more precisely it gives a factor of the form $(\ell\mu)^\omega$) and the maximum degree among all entries in $\mathbf{M}^{(0)}$, which is of the order of magnitude $s(n+g)/\mu$, essentially using the bounds in Corollary \ref{cor:sizegenM} divides by $\mu$. This gives a total complexity $\softO((\ell\mu)^{\omega}s(n+g)/\mu)=\softO(s\ell^{\omega}\mu^{\omega-1}(n+g))$. See Remark 5.26 and the discussion directly preceding it in \cite{BRS}.
\end{enumerate}
When considering $e>0$, exactly the same approach can be used. Indeed, from a computational points of view the difference is that the divisors $G_j$ are replaced by the divisors $p^eG_j$ and that the bounds in Corollary \ref{cor:sizegenM} have increased at most with a factor $p^e$. On the other hand, the number of generators of the module $\mathcal M_{s,\ell}(D,G,0)^{(e)}$, as indicated in Corollary \ref{cor:sizegenM} did \emph{not} increase: just as in \cite{BRS} we have $2(\ell+1)$ generators. Moreover, the expressions of the form $(z-R)^{p^ej}$ are of course equal to $(z^{p^e}-R^{p^e})^{j}$ for all $j$ according to the Freshman's dream.  As a result, simply following the approach in \cite{BRS} step by step, the complexity of calculating the desired $Q(z)$ as in Theorem \ref{thm:GS_new} is just increased with a factor $p^e$. Hence finding $Q(z)$ is possible in complexity $\softO(s \ell^\omega \mu^{\omega-1}p^e(n+g))$.

The next step in the decoding algorithm from \cite{BRS} is the root finding step. In our case, we know that $Q(z)=\sum_{j=0}^\ell Q_j z^{p^ej}$ for suitable $Q_j \in L(-jp^eG)$. Let us first review the case $e=0$ as managed in \cite{BRS}. What is done there is to find approximations of all roots of $Q(z)$ as power series in $x$ first. More precisely, it is shown in \cite{BRS} that finding all roots of $Q(z)$ in power series form up to precision $\beta$ can be done in complexity $\softO(\mu \beta \deg_z Q)$. One should choose $\beta \in \bigO(\ell(n+g))$ in order to make sure that the sought roots in $L(G)$ can be reconstructed from their power series approximations uniquely. This choice of $\beta$ comes from the fact that the degrees of the divisors $G_j$ all are in $\bigO(\ell(n+g))$. In fact, as observed in \cite{BN}, one can use equation \eqref{eq:G_nslinv} to show that $\beta \in \bigO(s(n+g)).$ Converting a power series approximation of precision $\beta$ to the corresponding element of $L(G)$ can be done in complexity $\softO(\mu^{\omega-1}\beta)$. For more details see Section V.F in \cite{BRS}. What happens for $e>0$ is first of all that the precision needs to be increased to $\beta^{(e)}:=p^e \cdot \beta$, which is $\bigO(p^e \ell(n+g))$. Further, rather than finding approximate power series roots of $Q(z)$, it is faster to find approximate power series roots of the polynomial $\overline{Q}(w)=\sum_{j=0}^\ell Q_j w^{j}$. Then using exactly the same algorithms as in \cite{BRS}, we can find all power series roots of $\overline{Q}(w)$ up to precision $\beta^{(e)}$ in complexity $\softO(\mu \beta^{(e)} \deg_z \overline{Q})=\softO(\mu p^e \ell(n+g))$. Since we are actually interested in roots of $Q(z)$, we now take the $p^e$-th roots of the found power series approximations. This is possible precisely if these roots are actually power series in $x^{p^ej}$, but since we know that any root of $Q(z)$ has multiplicity a multiple of $p^e$, these are the only power series roots of $\overline{Q}(w)$ that we are interested in. Taking the $p^e$-th power of one such approximate power series therefore takes at most $\bigO(\beta^{(e)}/p^e)$ operations. Since there are at most $\ell$ roots in total, we see that we need $\bigO(\ell \beta)$ many operations for taking all $p^e$-th roots. The result is up to $\ell$ many power series up to precision $\beta$ that all are roots with multiplicity a multiple of $p^e$ of $Q(z)$. Finally converting these powers to elements of $L(G)$ costs at most $\softO(\ell \mu^{\omega-1}\beta)$ operations. Combining all the above, we see that the root-finding step can be performed in complexity $\softO(s\ell \mu^{\omega-1}p^e(n+g))$. Hence the complete list-decoding algorithm of the code $C_L(D,G)$ using the approach from Theorem \ref{thm:GS_new} is possible in complexity $\softO(s \ell^\omega \mu^{\omega-1}p^e(n+g))$. We formulate this as a theorem.

\begin{theorem}
Suppose $A=0$. Computing the polynomial $Q(z)$ with the properties as given in Theorem \ref{thm:GS_new} as well as its roots in $L(G)$, can be done in complexity $\softO(s\ell^{\omega} \mu^{\omega-1}p^e(n+g)).$
\end{theorem}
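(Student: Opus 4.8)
The plan is to follow the module-theoretic decoding framework of \cite{BRS} step by step, tracking how the inseparability exponent $e$ enters each estimate; throughout we take $A=0$, which makes the bounds of Corollary \ref{cor:sizegenM} take their simplest form. That corollary then provides an explicit generating set of the $\Ya$-module $\MGS^{(e)}$ consisting of $2(\ell+1)$ elements, indexed by $j=0,\dots,\ell$, each of $\delta_{G,0}^{(e)}$-value in $\bigO(p^es(n+g))$; here one uses $\deg G\le sn/\ell$ from \eqref{eq:G_nslinv} to bound $\deg(p^eG_j)\in\bigO(p^es(n+g))$.

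First I would build a matrix over $\Fq[x]$. Fix $x\in\ffield$ with a single pole at $\Pinf$ of order $\mu$; then $\Ya$, and each twist $\Ya(p^eG_j)$, is a free $\Fq[x]$-module of rank $\mu$, so $\MGS^{(e)}$, of rank $\ell+1$ over $\Ya$, has rank $(\ell+1)\mu$ over $\Fq[x]$. Expanding the $2(\ell+1)$ generators from Corollary \ref{cor:sizegenM} in fixed $\Fq[x]$-bases yields a $2(\ell+1)\mu\times(\ell+1)\mu$ matrix $\mathbf{M}^{(e)}$ over $\Fq[x]$ with row space $\MGS^{(e)}$; dividing the $\delta$-bounds of Corollary \ref{cor:sizegenM} by $\mu$, its entries have degree $\bigO(p^es(n+g)/\mu)$. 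Computing $\mathbf{M}^{(e)}$ amounts to multiplying the $\mu$ basis elements of $\Ya$ by each of the $2(\ell+1)$ generators, which Algorithm 4 of \cite{BRS} does in $\softO(\mu^{\omega-1}(\deg(p^eG_j)+g+\delta))$ per call; since $\deg(p^eG_j)\in\bigO(p^es(n+g))$, this totals $\softO(s\ell^2\mu^{\omega-1}p^e(n+g))$, exactly a factor $p^e$ more than the $e=0$ bound of \cite{BRS}. The structural point is that $p^e$ only inflates degrees, not the number of generators, so the size of $\mathbf{M}^{(e)}$ is the same as in \cite{BRS}.

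Next I would extract $Q(z)$ by computing a shifted Popov form of $\mathbf{M}^{(e)}$ with respect to weights encoding $\delta_{G,0}^{(e)}$; one of its rows then represents a nonzero $Q(z)\in\MGS^{(e)}$ of minimal $\delta_{G,0}^{(e)}$-value. Because the hypothesis $\tau\le\tau_t^{(e)}$ of Theorem \ref{thm:GS_new} guarantees a nonzero $Q$ with $\delta_{G,0}^{(e)}(Q)\le 0$, i.e.\ with $Q_j\in L(-p^ejG)$ for all $j$, this minimal row is itself of the form required in Theorem \ref{thm:GS_new}. The cost is that of a Popov form of a size-$(\ell+1)\mu$ matrix with entries of degree $\bigO(p^es(n+g)/\mu)$, that is $\softO((\ell\mu)^\omega\cdot p^es(n+g)/\mu)=\softO(s\ell^\omega\mu^{\omega-1}p^e(n+g))$.

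Finally I would handle root finding. Writing $Q(z)=\sum_{j=0}^\ell Q_jz^{p^ej}$, set $\overline{Q}(w)=\sum_{j=0}^\ell Q_jw^j$ and first compute all power-series-in-$x$ roots of $\overline{Q}$ to precision $\beta^{(e)}:=p^e\beta$, with $\beta\in\bigO(s(n+g))$ large enough (using \eqref{eq:G_nslinv}) that roots in $L(G)$ reconstruct uniquely; by Section V.F of \cite{BRS} this costs $\softO(\mu\beta^{(e)}\deg_z\overline{Q})=\softO(\mu p^e\ell(n+g))$. By Theorem \ref{thm:GS_new} every root of $Q(z)$ has multiplicity a multiple of $p^e$, so only the roots of $\overline{Q}$ lying in $\Fq[[x^{p^e}]]$ matter; taking the $p^e$-th root of each of these at most $\ell$ series costs $\bigO(\ell\beta)$ operations and yields at most $\ell$ series of precision $\beta$, each of which is converted to its element of $L(G)$ in $\softO(\mu^{\omega-1}\beta)$, hence $\softO(s\ell\mu^{\omega-1}(n+g))$ overall. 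Thus root finding runs in $\softO(s\ell\mu^{\omega-1}p^e(n+g))$, and, summing the three steps, the Popov computation dominates, giving total complexity $\softO(s\ell^\omega\mu^{\omega-1}p^e(n+g))$. I expect the main obstacle to be the careful verification that each complexity bound of \cite{BRS} survives the passage from $z$-degree $\ell$ to $z$-degree $p^e\ell$ with only this one clean factor $p^e$ — in particular that neither $\mathbf{M}^{(e)}$ nor its Popov form grows in size, which rests precisely on $\MGS^{(e)}$ still being generated by $2(\ell+1)$ elements indexed by $j=0,\dots,\ell$, as in Corollary \ref{cor:sizegenM}.
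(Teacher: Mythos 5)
Your proposal is correct and follows essentially the same route as the paper: you reduce to the [BRS] framework, observe (via Corollary \ref{cor:sizegenM}) that $\MGS^{(e)}$ is still generated by $2(\ell+1)$ elements so that the matrix $\mathbf{M}^{(e)}$ keeps the same dimensions $2(\ell+1)\mu\times(\ell+1)\mu$ and only its entry degrees scale by $p^e$, bound the matrix construction and weighted Popov-form steps accordingly, and for root finding work with $\overline{Q}(w)=\sum_j Q_j w^j$ at precision $p^e\beta$ before taking $p^e$-th roots in $\Fq[[x^{p^e}]]$ and lifting to $L(G)$. The only difference is cosmetic: you make explicit a couple of small points the paper leaves implicit (that $\tau\le\tau_t^{(e)}$ guarantees a row of nonpositive $\delta_{G,0}^{(e)}$-value exists, and that a power series over $\Fq$ is a $p^e$-th power iff it lies in $\Fq[[x^{p^e}]]$), and you correctly say ``$p^e$-th root'' where the paper's text has a slip; the reasoning and the resulting complexity accounting match the paper's proof.
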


Choosing $e$ small will already significantly reduce the genus penalty, as shown in Corollary \ref{cor:insepGS_t1}, while choosing $e=\lceil \log_p(1+g\ell)\rceil$, will remove it entirely. We can now make the final comments in Section \ref{sec:Freshman} more precise. Since $p^{\lceil \log_p(1+g\ell)\rceil} < p (1+g\ell)$,  list-decoding without genus penalty, can be performed in $\softO(s \ell^{\omega} \mu^{\omega-1}p (1+g\ell)(n+g))$ operations, which simplifies to $\softO(s \ell^{\omega} \mu^{\omega-1}(1+g\ell)(n+g))$ if we consider the characteristic as a constant. In particular, bounded distance decoding up to half the designed minimum distance $d^*=n-\deg G$ can be performed in $\softO(\mu^{\omega-1} (1+g)(n+g))$ operations if the characteristic $p$ is viewed as a constant. We consider several examples, starting with one-point Hermitian codes.

\begin{example}\label{ex:Hermitian_one_point}
Consider a one-point code on the Hermitian function field $F=\mathbb{F}_{q^2}(x,y)$, where $y^q+y=x^{q+1}$. This function field has genus $q(q-1)/2$. Further, let $P_\infty$ be the unique pole of $x$ and $y$. Then the Weierstrass semigroup of $P_\infty$ is well known to be generated by $q$ and $q+1$, so that $\mu=q$. Choose $G=m P_\infty$ and $D$ the sum of the remaining $q^3$ rational places, so that in particular $n=q^3$. Note that
\[\lceil \log_p(1+g)\rceil \le \lceil \log_p(q^2)\rceil=\log_p(q^2).\]
Then the AG code $C_L(D,G)$ can be decoded up to half its designed minimum distance in complexity $\softO(\mu^{\omega-1}p^{\log_p(q^2)}(n+g))=\softO(q^{\omega+4})$ with our algorithm. Using the vectorial Berlekamp-Massey-Sakata algorithm, the same codes can be decoded up to their designed minimum distance in complexity $\bigO(\mu n^2)=\bigO(n^{7/3})=\bigO(q^7)$, see \cite{handbook}. The algorithm from \cite{sakata_2018} has the same complexity, while the algorithm from \cite{LBO} has complexity $\bigO(q^8).$ Since $\omega<2.37286$, our decoder decoder will be faster asymptotically. To be fair, one should mention that in \cite{handbook,sakata_2018,LBO} the codes are actually decoded up to the Kirfel-Pellikaan bound on the minimum distance, which may exceed the designed minimum distance $n-\deg G$ for high rates.
\end{example}

\begin{example}\label{ex:Suzuki_one_point}
Let $a \ge 1$ be an integer and define $q_0=2^a$, $q=2q_0^2$. Consider the Suzuki function field $F=\mathbb{F}_{q}(x,y)$, where $y^q+y=x^{q_0}(x^{q}+x)$. The Suzuki function field has genus $g=q_0(q-1)$ and $q^2+1$ rational places. We denote the unique pole of $x$ by $P_\infty$. It is known that the Weierstrass semigroup of $P_\infty$ is generated by $q$, $q+q_0$, $q+2q_0$, and $q+2q_0+1$.
Hence, smallest positive element of the semigroup of $P_\infty$, is $q$, see for example \cite{HS} so that $\mu=q$ in this setting.

Choose $G=m P_\infty$ and $D$ the sum of the remaining $q^2$ rational places, so that in particular $n=q^2$. Note that
\[\lceil \log_2(1+g)\rceil \le \lceil \log_2(q_0 q)\rceil=3a+1.\]
Then the AG code $C_L(D,G)$ can be decoded up to its designed minimum distance in complexity $\softO(q^{\omega-1}q_0 q q^2)=\softO(q^{\omega+2.5})$ using our decoding algorithm. The given complexity of the decoding algorithm from \cite{LBO} (resp. \cite{handbook}) is in this case $\bigO(q^{5.5})$ (resp. $\bigO(q^{6})$). The complexity estimate of our algorithm compares favourably to this, since $\omega<3$. If no fast linear algebra is used, i.e., if we set $\omega=3$, we obtain the same complexity.

Using \cite{sakata_2018}, these codes can be decoded up to their designed minimum distance in complexity $\bigO(\mu n^2)=\bigO(q^{5})$, see \cite{sakata_2018}. Since $\omega<2.37286$, our decoder will be faster asymptotically. However, unlike in the case of one-point Hermitian codes, using classical algorithms in linear algebra (i.e. setting $\omega=3$), would give a worse complexity.
\end{example}

\begin{example}\label{ex:Ree_one_point}
Let $a \ge 1$ be an integer and define $q_0=3^a$, $q=3q_0^2$. Consider the Ree function field $F=\mathbb{F}_{q}(x,y_1,y_2)$, where $y_1^q-y_1=x^{q_0}(x^{q}-x)$ and $y_2^q-y_2=x^{2q_0}(x^{q}-x)$. Similar to the Suzuki function field, it is known that this number of rational places is maximal for a function field of genus $3q_0(q-1)(q+q_0+1)/2$ over $\Fq$. We denote the unique pole of $x$ by $P_\infty$. For this and more information on the Suzuki function field, see Section 5.4 from \cite{Serre2020}. The smallest positive element of the semigroup of $P_\infty$, is known to be $q^2$, see for example Lemma 5.4 in \cite{DE}. Hence $\mu=q^2$ in this setting. The Weierstrass semigroup at $P_\infty$ is not known, except for $s=1$, but has strictly more than $13$ generators, see \cite{DE}. In fact, for $s=1$, its minimal set of generators has cardinality $132$. 

Now choose $G=m P_\infty$ and $D$ the sum of the remaining $q^3$ rational places, so that in particular $n=q^3$. Note that
\[\lceil \log_3(1+g)\rceil \le \lceil \log_3(3q_0 q^2)\rceil=5a+3.\]
Then the AG code $C_L(D,G)$ can be decoded up to half its designed minimum distance in complexity $\softO((q^2)^{\omega-1}q_0 q^2 q^3)=\softO(q^{2\omega+3.5})$ using our decoding algorithm.

Since the Weierstrass semigroup of $P_\infty$ is so large, the complexity from \cite{handbook} will be far worse than ours. The complexity from \cite{LBO} is $\bigO(q_0q^2(q^3)^2)=\bigO(q^{8.5})$. Our results compare favourably with \cite{LBO}, if we use fast linear algebra, but using classical algorithms ($\omega=3$), we would be worse off. Finally, using \cite{sakata_2018}, the algorithm with a genus penalty, the same codes can be decoded up to their designed minimum distance in complexity $\bigO(\mu n^2)=\bigO(q^{8})$. With the current state of the art concerning the value of $\omega$, our algorithm will therefore have a larger complexity.
\end{example}

\begin{remark}
Using algorithms from \cite{BN} the assumption $A=0$ can be relaxed. In order to compute $\mathcal M_{s,\ell}(D,G,0)^{(e)}$, we used Algorithm 4 in \cite{BRS}, which is able to compute the product of the elements in $\Ya$ with a given element of $\Ya(B)$ for any divisor $B$. This time, since $A \neq 0$, we need a generalization of this algorithm from \cite{BN}. Algorithm 1 from \cite{BN} is able to compute the product of $\mu$ basis elements of $\Ya(B')$ with a given element $a \in \Ya(B)$ for any divisors $B',B$, with complexity $\softO(\mu^{\omega-1}(\deg B + g+\delta_B(a)))$. Otherwise, one can proceed exactly as in the case $A=0$. As long as $\mathfrak a(A)$ is in $\bigO(\deg A)$, the resulting complexity is the same as for $A=0$.
\end{remark}

\begin{remark}
The basic algorithm from \cite{SV}, as well as the GS-list-decoding algorithm with $s=\ell=1$, actually often does correct up to the designed minimum distance. It was shown in \cite{JNH,HL} that the basic algorithm fails with probability around $1/q$ if the code $C_L(D,G)$ is defined over $\Fq$ and $q$ is large. Hence one could decode $C_L(D,G)$ as follows: first run the basic algorithm (or equivalently the GS list-decoder with $\ell=s=1$). In case of decoding failure, run our decoder with $s=\ell=1$ and $e$ such that $p^e \approx 1+g$. Then the \emph{average} complexity is $\softO(\mu^{\omega-1}(n+g)+q^{-1}\mu^{\omega-1}(1+g)(n+g))\softO(\mu^{\omega-1}(1+g/q)(n+g)).$ In case of one-point Hermitian codes over $\mathbb{F}_{q^2}$ for large $q$, this will result in an average complexity $\softO(\mu^{\omega-1}(n+g)=\softO(q^{\omega+2})$. Hence in the average complexity, the effect of running the inseparable GS decoder is not even visible.
\end{remark}

\section{Conclusion}

In this paper we have given a variation of the GS list-decoding algorithm for AG codes using the Freshman's dream. Our variation can be used to completely remove the genus penalty in the list-decoding radius of the usual GS list-decoder. We showed that the complete list-decoding algorithm of the code $C_L(D,G)$ using the approach from Theorem \ref{thm:GS_new} is possible in complexity $\softO(s \ell^\omega \mu^{\omega-1}p^e(n+g))$. Choosing $e$ small will already significantly reduce the genus penalty, as shown in Corollary \ref{cor:insepGS_t1}, while choosing $e=\lceil \log_p(1+g\ell)\rceil$, will remove it entirely. The inseparable Guruswami-Sudan list-decoder without genus penalty can be performed in $\softO(s \ell^{\omega} \mu^{\omega-1}(1+g\ell)(n+g))$ operations if the characteristic is constant. In particular, bounded distance decoding up to the designed minimum distance $d^*=n-\deg G$ can be performed in $\softO(\mu^{\omega-1}(1+g)(n+g))$ operations.

\end{document}